\DeclareMathAlphabet{\mathbf}{T1}{ppl}{bx}{n}
\DeclareMathAlphabet{\mathrm}{T1}{ppl}{m}{n}
\numberwithin{equation}{section}
\newcommand\note[1]%
\def\({\left(}
\def\){\right)}
\def\<{\left<}
\def\>{\right>}
\newtheorem{theorem}{Theorem}[section]
\newtheorem{proposition}[theorem]{Proposition}
\newtheorem{lemma}[theorem]{Lemma}
\newtheorem{definition}[theorem]{Definition}
\newtheorem{corollary}[theorem]{Corollary}
\theoremstyle{definition}
\newtheorem{remark}[theorem]{Remark}
\newcommand     {\comment}[1]   {}
\newcommand{\mute}[2] {}
\newcommand     {\printname}[1] {}
\newcommand\funclim[1]{\operatorname*{\mathrm{#1}}}
\renewcommand\lim{\funclim{lim}}
\newcommand\sur{\mathrel{\to\kern-1.8ex\to}}
\newcommand\iso{\mathrel{\hookrightarrow\kern-1.8ex\to}}
\newcommand\longhookrightarrow{\lhook\joinrel\longrightarrow}
\newcommand\longsur{\mathrel{\longrightarrow\kern-1.8ex\to}}
\newcommand\longiso{\mathrel{\longhookrightarrow\kern-1.8ex\to}}
\begin{document}

\title{Kirwan surjectivity for the equivariant Dolbeault cohomology}

%---------------------------------------
\author{Yi Lin}

\address{Yi Lin \\ Department of Mathematical Sciences \\  Georgia Southern University \\Statesboro, GA, 30460 USA}

\email{yilin@georgiasouthern.edu}
%================================
%--------------------------------------

\date{\today}
%\dedicatory{ }

%\commby{}

% -----------------------------------------------------------

\begin{abstract}   Consider the holomorphic Hamiltonian action of a compact Lie group $K$ on a compact K\"ahler manifold $M$ with a moment map $\Phi: M\rightarrow \mathfrak{k}^*$. Assume that $0$ is a regular value of the moment map. Weitsman raised the question of what we can say about the cohomology of the K\"ahler quotient $M_0:=\Phi^{-1}(0)/K$ if all the ordinary cohomology of $M$ is of type $(p, p)$.  

In this paper, using the Cartan-Chern-Weil theory we show that in the above context there is a natural surjective Kirwan map from an equivariant version of the Dolbeault cohomology of $M$ onto the Dolbeault cohomology of the K\"ahler quotient $M_0$. As an immediate consequence, this result provides an answer to the question posed by Weitsman.

\end{abstract}

% -----------------------------------------------------------
\maketitle
% -----------------------------------------------------------
%\tableofcontents

\section{Introduction}
Assume that there is a compact Lie group $K$ acting on a compact symplectic manifold $(M,\omega)$ in a Hamiltonian fashion with a moment map $\Phi: M\rightarrow \mathfrak{k}^*$,  and that $K$ acts freely on the level set $Z:=\Phi^{-1}(0)$. Then the quotient space $M_0:=Z/K$ naturally inherits a symplectic structure from that of $M$, and is called a symplectic quotient of the Hamiltonian $K$-manifold $M$. In her fundamental work \cite{Kir84}, Kirwan established the important Kirwan surjectivity theorem for compact Hamiltonian $K$-manifolds, which asserts that the Kirwan map $\kappa: H_K(M)\rightarrow H(M_0)$ is surjective.

Now assume that the Hamiltonian $K$-manifold $(M,\omega)$ is equivariant K\"ahler. In other words, assume that the symplectic $2$-form $\omega$ is K\"ahler, and that the action of $K$ is holomorphic. Then the symplectic quotient $M_0$ inherits a K\"ahler structure from that of $M$, and is called a K\"ahler quotient of the equivariant K\"ahler $K$-manifold $M$, c.f. \cite{GS82}. It is well known that in this case the action of $K$ naturally extends to an action of $G:=K^{\mathbb{C}}$. Moreover, when $M$ is a non-singular projective variety, and when the action of $G$ is linear,  the famous Kempf-Ness theorem \cite{KN79} asserts that the K\"ahler quotient $M_0$ can be naturally identified with the GIT quotient of $M$ by $G$.

 Kirwan studied \cite[Sec. 14]{Kir84} the linear action of a reductive algebraic group $G$ on a non-singular projective variety $M$ from the view point of GIT quotient. She showed that there is a Hodge structure on $H_G(M, \mathbb{Q})$, and that the surjective Kirwan map $H_G(M,\mathbb{Q})\rightarrow H(M_0,\mathbb{Q})$ in this case is strictly compatible with the Hodge structures.  In particular, this implies that if the Hodge numbers of $M$ satisfy $h^{p,q}=0$ when $p\neq q$, then the same is true for $M_0$.  However, Kirwan's method is algebro-geometric, and does not apply to the more general case of equivariant K\"ahler manifolds. Indeed, Weitsman raised the following question in an AIM moment map geometry workshop  \cite{AIM04} that took place in August 2004.
  
 \textbf{Question}: Suppose $M$ is a compact K\"ahler manifold and a Hamiltonian $K$-space. Suppose all the ordinary cohomology is of type $(p, p)$. Can we say anything about the cohomology of the quotient?
 
 On a complex manifold, it is well known that if the $\overline{\partial}\partial$-lemma holds, then there is a pure Hodge structure on its ordinary cohomology.  On an equivariant K\"ahler 
 Hamiltonian $K$-manifold $M$ , Lillywhite (\cite{Lilly98}, \cite{Lilly03}) and Teleman \cite{T00}  showed that the $\overline{\partial}_K\partial_K$-lemma holds on the Cartan complex $\Omega_K(M, \mathbb{C})$ of equivariant differential forms, which in particular implies that there is a pure Hodge structure on the equivariant De Rham cohomology $H_K(M, \mathbb{C})$. Let $M_0$ be the K\"ahler quotient of the equivariant K\"ahler Hamiltonian $K$-manifold $M$ taken at the zero level set. In this paper we show that the Kirwan map $\kappa: H_K(M,\mathbb{C})\rightarrow H(M_0,\mathbb{C})$  respects the Hodge structures on $H_K(M,\mathbb{C})$ and on $H(M_0,\mathbb{C})$ respectively. As an immediate consequence,  this implies that if the Hodge numbers of $M$ are concentrated on the diagonal, then the same property holds for $M_0$.

Our method relies on the Cartan-Chern-Weil theory in differential geometry, which we briefly explain here. First note that the zero level set $Z:=\Phi^{-1}(0)$ is the total space of a principal $K$-bundle $\pi: Z\rightarrow M_0$. In this setup, there is a Cartan operator
 $\mathcal{C}: \Omega_K(Z)\rightarrow \Omega(M_0)$ in the equivariant De Rham theory,  which is a homotopy equivalence from the Cartan complex $\{\Omega_K(Z), d_K\}$ to the De Rham complex $\{\Omega(M_0), d\}$. The definition of the Cartan operator depends on the choice of a connection on $Z$. However, in our situation the restriction of the K\"ahler metric to $Z$ provides a canonical connection, and thus a canonical Cartan operator $\mathcal{C}$. As a result, we have the following commutative diagram.
 
  \[  \begin{tikzcd}
   \Omega_K(M,\mathbb{C})    \arrow[rd, "\kappa"] \arrow[r, "i^*"]       &  \Omega_K(Z, \mathbb{C})\arrow[d, "\mathcal{C}"] \\ &\Omega(M_0,\mathbb{C})
   \end{tikzcd}\]
  
  Here the top horizontal map $i^*$ is induced by the inclusion map $i: Z\rightarrow M$, and the vertical map is the Cartan operator defined using the canonical connection on $Z$.  We define the diagonal map to be the canonical Kirwan map at the level of differential forms.

 We observe that the action of $K$ induces a transversely K\"ahler foliation on $Z$, and that with respect to this transverse K\"ahler structure, the curvature $2$-forms associated to the canonical connection on $Z$ are horizontal forms of type $(1, 1)$.  As a consequence, the Kirwan map $\kappa:\Omega_K(M)\rightarrow \Omega(M_0)$ respects the bi-gradings on
  $\Omega_K(M)$ and $\Omega(M_0)$ induced by the complex structures on $M$ and by the quotient complex structure on $M_0$ respectively. This enables us to show that the usual Kirwan map at the level of cohomologies is a morphism of Hodge structures, and to show that there is a surjective Kirwan map from the equivariant Dolbeault cohomology of $M$ onto the Dolbeault cohomology of $M_0$.

 We would like to point out that technically we only assume that the $K$ action on the zero level set $Z$ is locally free, so that our result applies to the general case when the K\"ahler quotient $M_0$ is an orbifold.  We note that to show there is a quotient K\"ahler structure on $M_0$, it is equivalent to show that the foliation on $Z$ induced by the locally free $K$-action is transversely K\"ahler; moreover, the Hodge structure on $H(M_0)$ is naturally isomorphic to the Hodge structure on $H_B(Z)$, the basic cohomology of $Z$ as a foliated manifold.  Thus to establish the main result of this paper it suffices to show that the Kirwan map is a morphism of Hodge structures from $H_K(M)$ to $H_B(Z)$.  It has been long known that the cohomology of a K\"ahler orbifold exhibits properties very similar to that of a K\"ahler manifold. However, it has been difficult to find a self-contained reference in the literature until more recently \cite{BBFMT17} appeared. Our foliation approach offers an alternative self-contained simple treatment of Hodge theory on a generic K\"ahler quotient. 
  
 This paper is organized as follows. Section \ref{Cartan-chain-homo} reviews equivariant De Rham cohomology theory, especially the definition of the Cartan operator. Section \ref{homological-algebra} explains for an abstract double complex how the $dd'$-lemma would lead to a Hodge type decomposition. Section \ref{eq-dolbeault} reviews the $\overline{\partial}_K\partial_K$-lemma for an equivariant K\"ahler manifold. Section \ref{foliation} presents some background materials on transversely K\"ahler foliations.  Section \ref{hodge-structure} proves that on an equivariant K\"ahler manifold the Kirwan map is a morphism of Hodge structures.

\section{Review of Equivariant cohomology theory}\label{Cartan-chain-homo}

We begin with a rapid review of equivariant de Rham theory and refer
to \cite{GS99} for a detailed account. Let $K$ be a compact Lie group with Lie algebra $\mathfrak{k}$, 
and let $\Omega_K(M) = (S\mathfrak{k}^*\otimes \Omega(M))^K$ be the Cartan complex of the $K$-manifold
$M$.  By definition
an element of $\Omega_K(M)$ is an equivariant polynomial from $\mathfrak{k}$ to $\Omega(M)$ and is called an
\emph{equivariant differential form} on $M$. The bigrading of the Cartan complex
is defined by $\Omega_K^{ij}(M) = (S^i\mathfrak{k}^*\otimes \Omega^{j-i}(M))^K$. It is equipped with the vertical differential 
$1 \otimes d$, which we will abbreviate to $d$, and the horizontal differential $d'$, which is defined by $d'\alpha(\xi) = -\iota(\xi)\alpha(\xi)$. Here $\iota(\xi)$ denotes inner product with the vector field on $M$ induced by $\xi \in \mathfrak{k}$. As a single complex, $\Omega_K(M)$ has the grading $\Omega^r_K(M) = \bigoplus_{i+j=r}\Omega_K^{ij}(M)$ and the total differential $d_K = d + d'$, which is called the \emph{equivariant exterior derivative}. The total cohomology $\text{ker}d_K/\text{im}d_K$ is the \emph{equivariant De Rham cohomology} $H_K(M)$.

We say that a form $\gamma\in \Omega(M)$ is \emph{horizontal}, if for all $\xi\in \mathfrak{k}$, $\iota(\xi)\gamma=0$.  We say that a form $\gamma\in \Omega(M)$ is \emph{basic}, if it is horizontal, and if for all $\xi\in\mathfrak{k}$, $\mathcal{L}(\xi)\alpha=0$. We will denote by
$\Omega_{\text{hor}}(M)$ the space of horizontal forms on $M$, and by $\Omega_{bas}(M)$ the space of basic forms on $M$. 
By definition, for all $1\leq l\leq k$, the curvature $2$-form $\mu^l$ is horizontal. It is also clear that $d\gamma\in \Omega_{bas}(M)$ for all $\gamma\in \Omega_{bas}(M)$. Thus we have a differential complex of basic forms $\{\Omega_{bas}(M), d\}$, which is a subcomplex of the usual de Rham complex on $M$. Its cohomology is called the \emph{basic cohomology}, and is denoted by $H_B(M)$.

Now suppose that $K$ acts locally freely on $M$. Let $\xi_1,\cdots, \xi_k$ be a basis of $\mathfrak{k}$, let $x^1,\cdots, x^k$ be the corresponding coordinates, i.e., the corresponding dual basis in $\mathfrak{k}^*$, and let $c_{ij}^l$'s be the structure constants of the Lie algebra $\mathfrak{k}$ relative to this basis. Then there exist $1$-forms $\theta^1,\cdots,\theta^k$, called the \emph{connection $1$-forms}, such that
\begin{equation}\label{connection-elements} \iota(\xi_i)\theta^j=\delta_i^j,\,\,\,\mathcal{L}(\xi_i)\theta^j=-c^j_{il}\theta^l,\end{equation}
where $\mathcal{L}(\xi_i)$ denotes the Lie derivative with the vector field on $M$ induced by $\xi_i\in \mathfrak{k}$. 
In this context, for all $1\leq l\leq k$, the curvature $2$-form $\mu^l$ is given by
\begin{equation}\label{curvature-2-form} \mu^l=d\theta^l+\frac{1}{2} c_{ij}^l\theta^i\theta^j.\end{equation}

For each multi-index
\begin{equation}\label{multi-index}I:= (i_1,\cdots, i_r), \, 1\leq i_1<\cdots <i_r\leq k,\end{equation} let 
\[ \theta^I=\theta^{i_1}\cdots \theta^{i_r},\,x^I=x^{i_1}\cdots x^{i_r}, \, \mu^I=\mu^{i_1}\cdots \mu^{i_r}\] denote the corresponding monomials in $\theta^j$, $x^j$ and $\mu^j$ respectively.

%The following result was shown in \cite{GS99}.

\begin{theorem}\label{horizontal-decom} (\cite[Thm 3.4.1]{GS99})
Suppose that the action of $K$ on $M$ is locally free, and that $\theta^1,\cdots,\theta^k$ are connection $1$-forms satisfying (\ref{connection-elements}).  Then every differential form $\alpha\in \Omega(M)$
can be written uniquely as
\[ \alpha =\displaystyle \sum_I\theta^I h_I,\]
where  $h_I\in \Omega_{\text{hor}}(M)$ for each multi-index $I$ as given in (\ref{multi-index}).\end{theorem}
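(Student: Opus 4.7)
\emph{Proof plan.} My plan is to establish a direct sum decomposition of the cotangent bundle itself and then lift it to the exterior algebra.

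First I would verify the pointwise situation. Fix $p\in M$. Since the action is locally free, the induced vector fields $\xi_1|_p,\ldots,\xi_k|_p$ are linearly independent in $T_pM$, so the subspace
\[ H_p^{*}:=\{\gamma\in T_p^{*}M \ :\ \gamma(\xi_i|_p)=0,\ 1\le i\le k\} \]
has dimension $n-k$, where $n=\dim M$. The relation $\iota(\xi_i)\theta^j=\delta_i^j$ in (\ref{connection-elements}) shows that $\theta^1|_p,\ldots,\theta^k|_p$ are linearly independent and that their span meets $H_p^{*}$ trivially: if $\gamma=\sum_j a_j\theta^j|_p$ is horizontal, evaluating on $\xi_i|_p$ gives $a_i=0$. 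A dimension count then yields the pointwise splitting $T_p^{*}M=V_p^{*}\oplus H_p^{*}$, where $V_p^{*}$ is spanned by the $\theta^j|_p$.

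Next I would globalize. The $V_p^{*}$ fit together into a smooth rank-$k$ subbundle $V^{*}\subset T^{*}M$ trivialized by $\theta^1,\ldots,\theta^k$, while the $H_p^{*}$ assemble into the annihilator subbundle $H^{*}$ of the smooth vertical distribution spanned by the $\xi_i$, whose smooth sections are by definition the horizontal forms. Passing to exterior powers, the smooth bundle decomposition $T^{*}M=V^{*}\oplus H^{*}$ induces
\[ \Lambda^{\bullet}T^{*}M \;=\; \bigoplus_I \theta^I\wedge \Lambda^{\bullet}H^{*}, \]
where $I$ ranges over the multi-indices of (\ref{multi-index}). Taking smooth sections yields the desired unique decomposition $\alpha=\sum_I \theta^I h_I$ with each $h_I\in\Omega_{\text{hor}}(M)$.

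Should an explicit extraction of the $h_I$ be wanted, I would introduce the operators $P_j:=\theta^j\iota(\xi_j)$ (no sum on $j$). A short computation using the super-anticommutation $\iota(\xi_i)\theta^j+\theta^j\iota(\xi_i)=\delta_i^j$ shows that $P_j^2=P_j$ and $P_iP_j=P_jP_i$ for all $i,j$. The projection onto the purely horizontal summand is then $\prod_{j=1}^k(1-P_j)$, and the remaining $h_I$ follow by induction on $|I|$ once the lower-order components have been subtracted off.

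The only substantive point is making sure that the pointwise linear algebra globalizes smoothly, but this is automatic since the splitting $T^{*}M=V^{*}\oplus H^{*}$ is produced by the globally defined smooth forms $\theta^j$ together with the globally defined smooth vector fields $\xi_i$ arising from the locally free $K$-action. I therefore do not anticipate any genuine obstacle.
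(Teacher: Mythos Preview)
Your argument is correct. Note, however, that the paper does not supply its own proof of this statement: it is quoted verbatim from \cite[Thm.~3.4.1]{GS99} and used as a black box. The proof in \cite{GS99} proceeds exactly via the commuting projections $P_j=\theta^j\iota(\xi_j)$ that you describe in your final paragraph, so your proposal in fact reproduces the cited argument (your bundle-splitting presentation in the first two paragraphs is just a coordinate-free repackaging of the same linear algebra).
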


Theorem \ref{horizontal-decom} implies immediately that there is a projection operator \begin{equation}\label{hor-proj} Hor: \Omega(M)\rightarrow \Omega_{\text{hor}}(M).\end{equation} It further gives rise to the following projection operator on the Cartan complex.
\begin{equation}\label{proj} Hor: \Omega_K(M) =\left(S\mathfrak{k}^*\otimes\Omega(M)\right)^K\rightarrow \left(S\mathfrak{k}^*\otimes\Omega_{\text{hor}}(M)\right)^K.\end{equation}

\begin{definition}\label{cartan-operator} The Cartan operator 
\begin{equation}\label{cartan} \mathcal{C}: \Omega_K(M)\rightarrow \Omega_{bas}(M)\end{equation}
is the composition of the projection operator 
(\ref{proj}) and the map
\[ (S\mathfrak{k}^*\otimes \Omega_{\text{hor}}(M))^K\rightarrow \Omega_{bas}(M)\]
coming from the ``evaluation map'' 
\[ x^I\otimes \alpha\mapsto \mu^I\alpha.\]
\end{definition}

\begin{remark} \label{dependence} Set $\theta=\sum_{i=1}^k\theta^i\otimes \xi_i$. Then $\theta$ is a $\mathfrak{k}$-valued connection one form satisfying
\begin{equation}\label{connection-form} \iota(\xi)\theta=\xi, \,\forall\,\xi\in\mathfrak{k}, \, (R_h)^*\theta=ad(h^{-1})\theta,\,\forall\, h\in K.\end{equation} Here $ad$ denotes the adjoint representation of $K$ on $\mathfrak{k}$. For $ x\in M$, let $V_x$ be the tangent space to the group orbit $K\cdot x$ at $x$, and let $H_x$ be the kernel of the one form $\theta_x$. Then we get a connection on $M$, i.e.,  a $K$-invariant splitting
\begin{equation}\label{splitting} T_x(M)=V_x\oplus H_x, \, x\in M.\end{equation}
Conversely, any $K$-invariant splitting as given in (\ref{splitting}) determines a $\mathfrak{k}$-valued connection one form $\theta$ satisfying (\ref{connection-form}). A close inspection shows that the definition of the Cartan map $\mathcal{C}$ does not depend on the choice of a basis $\xi_1,\cdots,\xi_k$ in $\mathfrak{k}$. It depends only on the connection given in (\ref{splitting}).

\end{remark}

\begin{theorem}\label{cartan-homotopy} (\cite[Thm 5.2.1]{GS99}) The Cartan operator (\ref{cartan}) is a chain homotopy equivalence from the Cartan complex $\{\Omega_K(M), d_K\}$ to the complex of basic forms $\{\Omega_{bas}(M),d\}$. 
\end{theorem}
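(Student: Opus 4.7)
My plan is to give the standard proof via the Weil model and the Mathai-Quillen-Kalkman transfer, organized as three steps. First I would verify that $\mathcal{C}$ is a chain map, $\mathcal{C}\circ d_K = d\circ \mathcal{C}$, by direct computation: after applying the horizontal projection (\ref{proj}), an equivariant form is written as $\sum_I x^I\otimes h_I$ with $h_I \in \Omega_{\text{hor}}(M)$, and applying $d$ to $\sum_I \mu^I h_I$ reproduces the $d_K$-side once one invokes the Bianchi identity $d\mu^l = -c^l_{ij}\theta^i\mu^j$, the Cartan structure equation $d\theta^l = \mu^l - \tfrac{1}{2}c^l_{ij}\theta^i\theta^j$, and $K$-invariance to rewrite interior-product contributions as bracket terms. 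Second, the inclusion $j: \Omega_{bas}(M) \hookrightarrow \Omega_K(M)$ that places a basic form in bidegree $(0,*)$ is a chain map since $d_K$ restricts to $d$ on horizontal $K$-invariant forms, and unwinding Definition \ref{cartan-operator} immediately gives $\mathcal{C}\circ j = \text{id}_{\Omega_{bas}(M)}$ (the horizontal projection is the identity on a basic form and only the multi-index $I=\emptyset$ contributes).

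The substantive work is to produce a chain homotopy $H$ on $\Omega_K(M)$ with $j\circ \mathcal{C} - \text{id} = Hd_K + d_KH$. The cleanest route uses the Kalkman/Mathai-Quillen isomorphism: identify $\Omega_K(M)$ with the basic subcomplex of the Weil model $(W(\mathfrak{k})\otimes \Omega(M))^K$, where $W(\mathfrak{k}) = S\mathfrak{k}^*\otimes \Lambda\mathfrak{k}^*$ carries its canonical Weil differential. The connection 1-forms $\theta^i$ and curvature 2-forms $\mu^i$ determine a characteristic homomorphism $\phi: W(\mathfrak{k})\to \Omega(M)$, and under the Kalkman isomorphism $\mathcal{C}$ is realized as $\phi\otimes \text{id}$ restricted to the basic subcomplex. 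Since $W(\mathfrak{k})$ admits an explicit contracting homotopy onto its degree-zero part (the standard Koszul contraction, lowering $S\mathfrak{k}^*$-degree by contracting against the universal connection generator), tensoring with $\Omega(M)$ and restricting to the basic subcomplex transports this contraction to the required chain homotopy $H$.

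The main obstacle is executing this last step cleanly: the Kalkman isomorphism is implemented by a twist of the form $\exp\bigl(-\sum_i \theta^i\otimes \iota_i\bigr)$, and verifying that the transferred contracting homotopy both preserves the basic subcomplex and satisfies exactly $j\circ\mathcal{C} - \text{id} = Hd_K + d_KH$ requires careful sign and bracket bookkeeping. A more self-contained alternative is to construct $H$ directly by induction on the filtration by $S\mathfrak{k}^*$-degree, using the decomposition of Theorem \ref{horizontal-decom} as the base case and peeling off one factor of $\theta^i$ at a time; this avoids the Weil model entirely but the underlying combinatorics is essentially the same.
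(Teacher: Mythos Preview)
The paper does not supply its own proof of this theorem: it is stated with attribution to \cite[Thm~5.2.1]{GS99} and then used as a black box throughout the rest of the argument. There is therefore nothing in the paper to compare your proposal against.

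That said, your sketch is essentially the argument given in the cited reference. In \cite{GS99} the result is obtained by passing to the Weil model via the Mathai--Quillen isomorphism and using the Koszul-type contracting homotopy on $W(\mathfrak{k})$, which is exactly the route you outline in your third step; your first two steps (chain map verification and $\mathcal{C}\circ j=\id$) are also carried out there. So your proposal is correct and aligned with the source the paper defers to, but it goes well beyond what the present paper actually contains.
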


We finish this section by recalling the definition of Hamiltonian symplectic manifolds, the Kirwan-Ginzburg equivariant formality theorem, and the Kirwan surjectivity theorem.

\begin{definition} Consider the action of a Lie group $K$ on a symplectic manifold $(M,\omega)$. Let $\mathfrak{k}^*$ be the dual of the Lie algebra $\mathfrak{k}$ of $K$. We say that the action of $K$ is Hamiltonian, if there exists an equivariant map $\Phi: M\rightarrow \mathfrak{k}^*$, called the moment map, satisfying the Hamiltonian equation:
\begin{equation}\label{Hamiltonian-eq} -\iota(\xi)\omega= d< \Phi, \xi>, \,\,\forall\,\xi\in\mathfrak{k},\end{equation}
where $<\cdot, \cdot>$ denotes the natural pairing between $\mathfrak{k}$ and $\mathfrak{k}^*$. 

\end{definition}

\begin{theorem}\label{formality}(\textbf{Kirwan-Ginzburg Equivariant formality theorem}) (\cite{Kir84}, \cite{Gin87}) Suppose that the action of a compact Lie group $K$ on a compact symplectic manifold $(M, \omega)$ is Hamiltonian. Then the Hamiltonian $K$-manifold $M$ is equivariantly formal, i.e., there is an isomorphism of $(S\mathfrak{t}^*)^K$-modules
\[H_K(M)\cong (S\mathfrak{k}^*)^K\otimes H(M).\]
\end{theorem}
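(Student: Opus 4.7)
The plan is to prove the theorem via Morse theory applied to a generic component of the moment map, following the strategy of Atiyah--Bott and Kirwan. First I would reduce to the case of a torus action: if $T\subset K$ is a maximal torus with Weyl group $W$, then the composition of $\Phi$ with the projection $\mathfrak{k}^*\to\mathfrak{t}^*$ is a moment map for the restricted $T$-action on $M$, and there are natural identifications $H_K(M)\cong H_T(M)^W$ and $(S\mathfrak{k}^*)^K\cong (S\mathfrak{t}^*)^W$. Thus it suffices to prove equivariant formality for a Hamiltonian torus action and then take $W$-invariants on both sides.

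For the torus case, I would choose $\xi\in\mathfrak{t}$ whose one-parameter subgroup $\{\exp(s\xi):s\in\mathbb{R}\}$ is dense in $T$, and consider the real function $f:=\langle\Phi,\xi\rangle$. From the Hamiltonian equation~(\ref{Hamiltonian-eq}) one sees at once that $\crit(f)=M^T$, the $T$-fixed set. Picking a $T$-invariant almost complex structure $J$ compatible with $\omega$, the normal bundle of any connected component $C$ of $M^T$ decomposes as a direct sum of complex weight spaces under $T$, on each of which the Hessian of $f$ acts by a real scalar equal to the pairing of $\xi$ with the corresponding weight. The negative eigenspaces therefore assemble into a complex subbundle of the normal bundle, so each Morse--Bott index $\lambda_C$ is even.

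The key step is to convert this parity of indices into equivariant formality. The equivariant Morse--Bott inequalities yield
\[
\sum_{C\subset M^T} t^{\lambda_C}\,P_T^C(t)\;-\;P_T^M(t)\;=\;(1+t)\,R(t),
\]
with $R$ a polynomial having nonnegative coefficients. Since $T$ acts trivially on each $C$, we have $P_T^C(t)=P_{BT}(t)\,P_C(t)$. Combining this with the Serre spectral sequence upper bound $P_T^M(t)\le P_{BT}(t)\,P_M(t)$ and the ordinary Morse--Bott inequality $\sum_C t^{\lambda_C}P_C(t)\ge P_M(t)$, one is forced into equality throughout, which is exactly the statement that the Serre spectral sequence of the Borel fibration $M\hookrightarrow EK\times_K M\to BK$ degenerates at $E_2$.

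The main obstacle I expect is this Morse-theoretic ``lacunary'' argument: one must set up the equivariant Morse--Bott stratification by gradient flow of $f$, use the Thom--Gysin sequences of the negative normal bundles of the critical manifolds, and verify that the evenness of all $\lambda_C$ rules out nontrivial connecting maps in the relevant long exact sequences. Once $E_2$-degeneration is in hand, a standard Leray--Hirsch argument, choosing lifts to $H_K(M)$ of any homogeneous basis of $H(M)$, produces the desired module isomorphism $H_K(M)\cong (S\mathfrak{k}^*)^K\otimes H(M)$.
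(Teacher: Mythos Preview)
The paper does not give its own proof of this theorem: it is stated with attribution to \cite{Kir84} and \cite{Gin87} and used as a black box. So there is nothing in the paper to compare your proposal against.

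Your outline is essentially Kirwan's original argument from \cite{Kir84}, and the overall architecture (reduce to a maximal torus, take a generic component of the moment map as a Morse--Bott function, observe that the negative normal bundles are complex and hence have even rank, deduce equivariant perfection and then $E_2$-degeneration) is correct. One point to sharpen: the paragraph with the Poincar\'e-polynomial inequalities does not by itself force $R(t)=0$; parity of the indices $\lambda_C$ alone is not enough in equivariant cohomology, since $H_T^*(\mathrm{pt})$ already lives entirely in even degrees. What actually makes the Thom--Gysin sequences split is the Atiyah--Bott self-completion lemma: because each critical manifold $C$ is a component of $M^T$, the negative normal bundle has no trivial $T$-weight, so its equivariant Euler class is a non-zero-divisor in $H_T^*(C)$. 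You allude to this in your ``main obstacle'' paragraph, but it should replace, not supplement, the inequality manipulation. Once equivariant perfection is established this way, the comparison with the ordinary Morse--Bott identity and the Leray--Hirsch step go through exactly as you describe.
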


\begin{theorem}\label{Kirwan-surj}(\textbf{Kirwan surjectivity theorem}) (\cite{Kir84}) Consider the Hamiltonian action of a compact Lie group $K$ on a compact symplectic manifold $(M,\omega)$ with a moment map $\Phi: M\rightarrow \mathfrak{k}^*$. Assume that $0\in \mathfrak{k}^*$ is a regular value.
Then the Kirwan map $ \kappa: H_K(M)\rightarrow H_K(\Phi^{-1}(0))$ induced by the inclusion map
$i: \Phi^{-1}(0)\rightarrow M$ is surjective.
\end{theorem}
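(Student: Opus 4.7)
The plan is to apply equivariant Morse theory to the norm-square of the moment map $f := \|\Phi\|^2 : M \to \mathbb{R}$, where the norm comes from a fixed $\Ad$-invariant inner product on $\mathfrak{k}^*$. First I would analyze the critical set of $f$. A short calculation using the Hamiltonian equation (\ref{Hamiltonian-eq}) shows that $x \in M$ is critical for $f$ iff the fundamental vector field generated by $\Phi(x) \in \mathfrak{k}$ (via the inner product identification $\mathfrak{k}^* \cong \mathfrak{k}$) vanishes at $x$. The critical set then decomposes into a finite disjoint union of $K$-invariant closed subsets $C_\beta$ indexed by the images in $\mathfrak{k}^*/K$ of a finite collection $\{\beta\}$, and the minimum critical set $C_0$ is precisely $\Phi^{-1}(0)$, which by the regular-value assumption is a closed submanifold on which $K$ acts locally freely.

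Although $f$ is generally not Morse--Bott, Kirwan's key observation is that it is \emph{minimally degenerate}, which is enough to run equivariant Morse theory. In particular, a $K$-invariant stratification $\{S_\beta\}$ of $M$ exists such that each $S_\beta$ is a locally closed $K$-invariant submanifold $K$-equivariantly deformation retracting onto $C_\beta$, with the open stratum $S_0$ retracting onto $\Phi^{-1}(0)$. Writing $M_{\geq \beta} := \bigcup_{\beta' \geq \beta} S_{\beta'}$ and $M_{>\beta} := \bigcup_{\beta'>\beta} S_{\beta'}$, the Thom--Gysin long exact sequence in equivariant cohomology
\[ \cdots \to H_K^{*-d_\beta}(C_\beta) \to H_K^*(M_{\geq \beta}) \to H_K^*(M_{>\beta}) \to \cdots \]
relates the equivariant cohomologies of consecutive open sets in the stratification, where $d_\beta$ is the real codimension of $S_\beta$ in $M$.

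The heart of the argument is to show that this stratification is equivariantly perfect, so that each of the above long exact sequences splits into short exact sequences. By the Atiyah--Bott lemma, this reduces to showing that the equivariant Euler class of the negative normal bundle to $C_\beta$ is not a zero divisor in $H_K^*(C_\beta)$; this follows from the fact that the stabilizer of the element $\beta$ acts on the negative normal directions with no zero weights, a property guaranteed by the explicit form of the Hessian of $\|\Phi\|^2$ at $C_\beta$. A downward induction over the partial order on strata (starting from the highest-index $C_\beta$ and working toward $C_0$) then yields the surjectivity of the successive restriction maps $H_K^*(M_{\geq \beta'}) \to H_K^*(M_{> \beta'})$, and, in the limit, the surjectivity of $H_K^*(M) \to H_K^*(S_0)$.

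Finally, combining this with the $K$-equivariant deformation retraction $S_0 \simeq \Phi^{-1}(0)$ identifies $H_K^*(S_0)$ with $H_K^*(\Phi^{-1}(0))$ and shows that the map $\kappa$ induced by $i : \Phi^{-1}(0) \hookrightarrow M$ is surjective. The main obstacle I expect is verifying minimal degeneracy of $f$ together with the Atiyah--Bott nonzero-divisor condition at every $C_\beta$; this requires a careful local model for $\Phi$ near each $C_\beta$ and the observation that the critical values $\|\beta\|^2$ are all strictly positive for $\beta \neq 0$, which forces the stabilizer weights on the negative normal bundle to be nonzero.
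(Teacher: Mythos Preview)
The paper does not prove this theorem: it is stated as background and attributed to Kirwan \cite{Kir84} without argument. Your sketch is precisely the strategy of Kirwan's original proof---equivariant Morse theory for $f=\|\Phi\|^2$, minimal degeneracy of $f$, the Thom--Gysin sequences attached to the Morse stratification, and the Atiyah--Bott self-completion criterion---so there is nothing to compare against in this paper, and your outline is faithful to the cited source.
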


\section{ The $dd'$-lemma for an abstract double complex}\label{homological-algebra}

Let $(K^{**}, d, d')$ be a double complex of $R$-modules (over a commutative ring $R$). In other words, let $K^{**}$ be a doubly graded complex of $R$-modules equipped with a horizontal differential $d'$ and a vertical differential $d$ which are both of degree one, and which anti-commute with each other. Throughout this section $(K^{**}, d, d')$ is assumed to be bounded in the following sense: for each $n$, there are only finitely many non-zero components in the direct sum $K^n = \bigoplus_{i+j=n} K^{i,j}$. Set $D=d+d'$, and $K_{d'}=\text{ker}\,{d'}\cap K$. Since $d$ anti-commutes with $d'$, $\{K_{d'}^{p, *}, d\}$ is a differential complex for all integer $p$. In what follows, we will denote by $H^{p,q}(K, d)$ and $H^{p,q}(K_{d'}, d)$ the $q$-th cohomology associated to the differential complexes $\{K^{p,*}, d\}$ and $\{K^{p,*}_{d'}, d\}$ respectively.

It is easy to see that the inclusion map $K_{d'}\hookrightarrow K$ induces two morphisms of differential complexes as follows.
\begin{equation}\label{morphism1}  \{K_{d'}, d\}\rightarrow \{K, d\},
\end{equation}
\begin{equation}\label{morphism2}  \{K_{d'}, d\}\rightarrow \{K, D\}.
\end{equation}
 Clearly, (\ref{morphism1}) and (\ref{morphism2})  further induce two homomorphisms of cohomologies respectively as follows.
\begin{equation}\label{homo1} H^{p,q}(K_{d'}, d)\rightarrow H^{p,q}(K, d),\end{equation}   
\begin{equation}\label{homo2} \displaystyle \bigoplus_{p+q=r}H^{p,q}(K_{d'}, d)\rightarrow H^r(K, D). \end{equation}
\begin{proposition}\label{decomposition}  Assume that the $dd'$-lemma holds for the double complex $(K^{**}, d, d')$. That is to say that
\begin{equation}\label{dd'-lemma} \text{ker}\,d \cap \text{im} d'=\text{im}\, d'\cap \text{ker}\, d=\text{im}\, dd'
\end{equation}
Then the following properties hold true.
\begin{itemize} \item [a)] The homomorphism (\ref{homo1}) induced by (\ref{morphism1}) is an isomorphism.
\item[b)] The homomorphism (\ref{homo2}) induced by (\ref{morphism2}) is an isomorphism. \end{itemize}
\end{proposition}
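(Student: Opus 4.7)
My plan is to deduce both parts from the $dd'$-lemma by the same basic trick: exploit the anticommutation $dd'+d'd=0$ to recognize that certain correction terms are $d$-closed and $d'$-exact, invoke the $dd'$-lemma to write them as $dd'\xi$, and modify the cocycle representative so that it lands in the desired subcomplex or pure bidegree.

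For part (a), I would first prove surjectivity. Given $[\alpha]\in H^{p,q}(K,d)$ with $d\alpha=0$, the element $d'\alpha\in K^{p+1,q}$ is $d$-closed (since $dd'\alpha=-d'd\alpha=0$) and manifestly $d'$-exact, so the $dd'$-lemma furnishes $\eta\in K^{p,q-1}$ with $d'\alpha=dd'\eta$. Setting $\alpha':=\alpha+d\eta$ gives $[\alpha']=[\alpha]$ in $H^{p,q}(K,d)$, $d\alpha'=0$, and $d'\alpha'=d'\alpha+d'd\eta=d'\alpha-dd'\eta=0$, so $\alpha'\in K_{d'}^{p,q}$ is a $d$-cocycle representing the required preimage class. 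Injectivity is the mirror argument: if $\alpha\in K_{d'}^{p,q}$ is $d$-closed and $\alpha=d\beta$ in $K$, then $d'\beta$ is $d$-closed (since $d(d'\beta)=-d'd\beta=-d'\alpha=0$) and $d'$-exact, so $d'\beta=dd'\gamma$ by the $dd'$-lemma, and the corrected primitive $\tilde\beta:=\beta+d\gamma$ lies in $K_{d'}^{p,q-1}$ with $d\tilde\beta=\alpha$.

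For part (b), the map (\ref{homo2}) is well-defined because any pure-bidegree element that is both $d$- and $d'$-closed is automatically $D$-closed. Injectivity follows quickly from (a). Given $\alpha=\sum_{p+q=r}\alpha^{p,q}\in\bigoplus K_{d'}^{p,q}$ with each component $d$-closed and $\alpha=D\beta$, the bidegree decomposition of $D\beta$ yields $\alpha^{P,Q}=d\beta^{P,Q-1}+d'\beta^{P-1,Q}$. The summand $d'\beta^{P-1,Q}$ is $d$-closed and $d'$-exact, so the $dd'$-lemma supplies $\xi$ with $d'\beta^{P-1,Q}=dd'\xi$, whence $\alpha^{P,Q}=d(\beta^{P,Q-1}+d'\xi)$ is $d$-exact in $K$; part (a) then forces $[\alpha^{P,Q}]=0$ in $H^{P,Q}(K_{d'},d)$. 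Surjectivity I would prove by induction on the spread $p_{\max}-p_{\min}$ of bidegrees appearing in a $D$-cocycle representative $\omega=\sum\omega^{p,q}$. The top component $\omega^{p_{\max},r-p_{\max}}$ is $d'$-closed because $\omega^{p_{\max}+1,\cdot}=0$ in the relation $D\omega=0$, while the same relation at bidegree $(p_{\max},r-p_{\max}+1)$ gives $d\omega^{p_{\max},r-p_{\max}}=-d'\omega^{p_{\max}-1,r-p_{\max}+1}$, which is $d$-closed and $d'$-exact and hence equals $dd'\xi$ by the $dd'$-lemma. Subtracting $D\xi$ from $\omega$ preserves the $D$-class, converts the top piece into a pure-bidegree element of $\ker d\cap\ker d'$, and only perturbs the adjacent component at $(p_{\max}-1,r-p_{\max}+1)$; peeling off this pure part leaves a $D$-cocycle of strictly smaller spread, and the induction closes.

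The main technical challenge is the inductive bookkeeping in the surjectivity part of (b): at each step one must verify that the correction by $D\xi$ affects only two adjacent bidegree components, does not raise $p_{\min}$, and, together with the boundedness of $K^{**}$, guarantees termination of the induction. Conceptually, (b) is the classical statement that under the $dd'$-lemma the column spectral sequence of the double complex degenerates at the $E_1$-page, and the cocycle-level manipulations above realize this degeneration explicitly at the chain-complex level made available by (a).
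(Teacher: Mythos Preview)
Your proof is correct. Part (a) is essentially the paper's argument, with only a cosmetic variation in injectivity: the paper applies the $dd'$-lemma directly to $\alpha$ (which lies in $\ker d'\cap\operatorname{im} d$) to obtain $\alpha=dd'\gamma$, whereas you correct the primitive $\beta$; both work.

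In part (b), your approach diverges from the paper's. For injectivity the paper simply cites \cite[Lemma~5.15]{DGMS75}, while you give a self-contained proof via the $dd'$-lemma and part (a); this is a genuine improvement in exposition. For surjectivity, however, the paper avoids your induction on the bidegree spread by a single global application of the $dd'$-lemma: given $D\alpha=0$, the element $d'\alpha$ (undecomposed) is $d'$-exact and $d$-closed, hence $d'\alpha=d'd\beta=d'D\beta$ for some $\beta$, so $\alpha-D\beta$ is simultaneously $d$- and $d'$-closed; decomposing by bidegree then gives the required preimage in one step. Your inductive peeling is correct and makes the spectral-sequence degeneration explicit, but the paper's one-shot trick is considerably shorter and shows that the boundedness hypothesis is only needed to ensure the final bidegree decomposition is a finite sum, not to control an inductive process.
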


\begin{proof} %The proof will be left as an exercise for the readers. The injectivity of (\ref{homo2}) can be easily shown using 

%, though it could also be proved directly using an inductive argument. 

\begin{itemize}

\item[a)]  We first show that the map (\ref{homo1}) is injective. Suppose that the image of $[\alpha]\in  H^{p,q}(K_{d'},d)$ under (\ref{homo1}) is zero, where $\alpha\in K^{p,q}_{d'}$. Then there exists
$\beta \in K^{p,q-1}$ such that $\alpha=d\beta$. Thus $\alpha$ is both $d$-exact
and $d'$-closed. Therefore by the $dd'$-lemma there exists $\gamma \in K^{p,q}$ such that
$\alpha=dd' \gamma$. It follows that $\alpha$ must represent a trivial cohomology class in $ H^{p,q}(K_{d'},d)$.

Next we show that the map (\ref{homo1}) is surjective. Suppose that $[\alpha]\in H^{p,q}(K, d)$, where $\alpha\in K^{p,q}$. Then we have that $d\alpha=0$. Since $d$ anti-commutes with $d'$, $d'\alpha$ is both $d'$-exact and 
$d$-closed.  Thus by the $dd'$-lemma there exists $\beta\in K^{p,q}$ such that
$d'\alpha=d'd\beta$.  Clearly, $\alpha-d\beta \in K_{d'}$ represents a class in 
$H^{p,q}(K_{d'},d)$ whose image under (\ref{iso-1}) is $[\alpha-d\beta]=[\alpha]$. This completes the proof.

\item[b)] We first note that the injectivity of (\ref{homo2}) follows directly from \cite[Lemma 5.15]{DGMS75}. It suffices to show that (\ref{homo2}) is surjective.  Suppose that $[\alpha]\in H^r(K, D)$, where $\alpha \in K^r$ such that $D\alpha=0$. Note that $d'\alpha $ is both $d'$-exact and $d$-closed. Thus by the $dd'$-lemma there exist $\beta \in K^{r-1}$ such that $d'\alpha =d'd\beta=d'D\beta$. This implies that $d'(\alpha-D\beta)=0$. However,  $(d+d')(\alpha-D\beta)=D(\alpha-D\beta)=0$. It follows that $d(\alpha-D\beta)=0$.  
Now use the bigrading on $K$ to decompose $\alpha-D\beta$ as follows.
\begin{equation}\label{finite-sum} \alpha-D\beta=\displaystyle\sum_{p+q=r} \gamma^{p,q},\, \, \gamma^{p,q}\in K^{p,q}.\end{equation} By our assumption on the boundedness of $K^{*,*}$, (\ref{finite-sum}) is indeed a finite sum. Since $\alpha-D\beta$ is both $d$ and $d'$ closed, it is easy to see that $d'\gamma^{p,q}= d \gamma^{p,q}=0$ for all $p+q=r$. It follows that \[\displaystyle \sum_{p+q=k}[\gamma^{p,q}]\in \displaystyle \bigoplus_{p,q}H^{p,q}(K_{d'}, d);\] moreover, its image under (\ref{iso-2}) is $[\alpha]$. This completes the proof.

%It suffices to prove by induction on $s$ the following claim: if there is a chain of equations
%$\alpha=d\beta^{p,q-1}+d'\beta^{p-1,q}, d\beta^{p-1, q}=d'\beta^{p-2,q+1}, \cdots,
 %d\beta^{p-s-1,q+s}=0$, where $\beta^{p',q'}\in K^{p',q'}$, then $\alpha$ must be $d$-exact. 
 %$\forall\, 1\leq i\leq s+1$ there exist $\gamma^{p-i-1,q+i-1}\in K^{p-i-1,q+i-1}$, such that $d'\beta^{p-i-1,q+i}=dd' \gamma^{p-i-1,q+i-1}$.
 
 %If $s=0$, then we have that $d\beta^{p-1,q}=0$. Thus $d'\beta^{p-1,q}$ is both $d'$-exact and $d$-closed. It follows from the $dd'$-lemma that there is $\gamma^{p-1,q-1}\in K^{p-1,q-1}$ such that $d'\beta^{p-1,q}=dd'\gamma^{p-1,q-1}$. So $\alpha= d(\beta^{p,q-1}+d'\gamma^{p-1,q-1})$. This proves the case $s=0$.

%Now assume that the claim is true for $s-1$, and consider the case of $s$. By assumption, $d'\beta^{p-s-1,q+s}$ is both $d'$-exact and $d$-closed. So by the $dd'$-lemma there exists $\gamma^{p-s,q+s}\in K^{p-s,q+s} $ such that $d'\beta^{p-s-1,q+s}=dd'\gamma^{p-s,q+s}$. So we get a chain of equations $\alpha =d\beta^{p,q-1}+d'\beta^{p-1,q},\cdots, d\beta^{p-s+1,q+s-2}   +d'(\beta^{p-s,q+s-1}+d'\gamma^{p-s,q+s})=0,
%d(\beta^{p-s,q+s-1}+d'\gamma^{p-s,q+s})=0$. So it follows from the inductive hypothesis that $\alpha$ must be $d$-exact.

\end{itemize}
\end{proof}

\section{Equivariant Dolbeault cohomology and the $\overline{\partial}_G\partial_G$-lemma}\label{eq-dolbeault}

%Let us begin with the definition of the equivariant basic Dolbeault cohomology.
%\begin{definition}(\cite[Definition 6.6]{GNT16})
%Let $\mathfrak{g}$ be a finite-dimensional Lie algebra.
%A $\mathfrak{g}$-differential graded algebra of Dolbeault type is a bigraded algebra $A=\bigoplus_{p,q\in\mathbb{Z}}A^{p,q}$
%whose total algebra, defined by $A_{k}=\bigoplus_{p+q=k}A^{p,q}$, is a $\mathfrak{g}$-differential graded algebra such that
%\begin{itemize}
  %\item [(i)]$d(A^{p,q})\subset A^{p+1,q}\bigoplus A^{p,q+1}$;
  %\item [(ii)]$\iota(X)(A^{p,q})\subset A^{p-1,q}\bigoplus A^{p,q-1}$ for any $X\in\mathfrak{g}$;
 % \item [(iii)]$\mathcal{L}(X)(A^{p,q})\subset A^{p,q}$ for any $X\in\mathfrak{g}$.
%\end{itemize}
%\end{definition}

  Throughout this section, suppose that there is a holomorphic action of a compact Lie group $K$ on a $2n$ dimensional complex manifold $M$. On the space of differential forms $\Omega(M)$ the exterior differential $d$ splits as
$ d= \overline{\partial}+\partial$.  Accordingly on the space of equivariant differential forms $\Omega_K(M)$ the operator
$1\otimes d$ splits as $1\otimes d=1\otimes \overline{\partial}+1\otimes \partial$. For brevity, we will also abbreviate $1\otimes \overline{\partial}$ to $\overline{\partial}$ and $1\otimes \partial$ to $\partial$.

Since the action of $K$ is holomorphic,  $\Omega^{p,q}(M)$ is a $K$-module for all $(p, q)$. Thus the space 
\begin{equation}\label{dolb-bigrading} \Omega_K^{p,q}(M):=\bigoplus_{p'+i=p,q'+i=q}(S^i\mathfrak{k}^*\otimes \Omega^{p'q'}(M))^K\end{equation}
is well defined for all $(p, q)$.

For all $\xi\in\mathfrak{k}$, denote by $\xi_M^{1,0}$ and $\xi_M^{0,1}$ respectively the $(1,0)$ and $(0,1)$ components of the vector field on $M$ induced by $\xi$. Then on the Cartan complex $\Omega_K(M)$ the operator $d'$ splits as $d'=d^{'1,0}+d^{'0,1}$, where
\[ (d^{'1,0}\alpha)(\xi)= \iota(\xi_M^{1,0})(\alpha(\xi)),\,(d^{'0,1}\alpha)(\xi)=\iota(\xi_M^{0,1})(\alpha(\xi)), \, \forall\, \alpha\in \Omega_K(M).\]

Thus the equivariant exterior differential $d_K$ splits as $d_K=\overline{\partial}_K+\partial_K$, where

\[\begin{split} &
\partial_{K}=\partial+d^{'1,0},\,\,\overline{\partial}_{K}=\overline{\partial}+d^{'0,1}.
\end{split}\]

It is straightforward to check that \[\overline{\partial}_K^2=0,\,\partial_K^2=0,\,\partial_K\overline{\partial}_K+\overline{\partial}_K\partial_K=0.\]
\begin{definition} The \emph{equivariant Dolbeault cohomology} of $M$, denoted by $H_{K}^{p,*}(M, \mathbb{C})$, is defined to be the cohomology of the differential complex $\{ \Omega_K^{p,*}(M), \overline{\partial}_K\}$.
%\begin{equation}\label{eq-Dolbeaut} \cdots\xrightarrow{\overline{\partial}_K} \Omega_{K}^{p,q}(M)\xrightarrow{\overline{\partial}_K}\Omega_{K}^{p,q+1}(M)\xrightarrow{\overline{\partial}_K}\Omega_{K}^{p,q+2}(M)\xrightarrow{\overline{\partial}_K}\cdots \end{equation}
\end{definition}
  %\begin{lemma}($\mathbf{\overline{\partial}\partial}$-lemma) Suppose that $M$ is a compact K\"ahler manifold. Then
% \[ \text{ker}\,\overline{\partial} \cap \text{im}\,\partial=\text{im}\,\overline{\partial}\cap\text{ker}\,\partial=\text{im}\,\overline{\partial}\partial.\] 
 
% \end{lemma}

The following result was due to Lillywhite \cite{Lilly98} and Teleman \cite{T00}.

\begin{theorem}\label{eq-formality1}(\cite[Thm. 5.1]{Lilly03}) Consider the holomorphic action of a compact Lie group $K$ on a compact K\"ahler manifold $M$. Assume that the action is equivariantly formal. Then the following properties hold true.
\begin{itemize}
\item[a)]  There is an isomorphism of $(S^i(\mathfrak{k}^*))^K$-modules \[H^{p,q}_K(M,\mathbb{C})\cong \displaystyle \bigoplus_{p'+i=p, q'+i=q}(S^i(\mathfrak{k}^*))^K\otimes H^{p',q'}_{\overline{\partial}}(M).\]

\item[b)]
\[ \text{ker}\,\overline{\partial}_K \cap \text{im}\,\partial_K=\text{im}\,\overline{\partial}_K\cap\text{ker}\,\partial_K=\text{im}\,\overline{\partial}_K\partial_K.\] 
\end{itemize}

\end{theorem}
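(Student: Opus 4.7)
The plan is to first establish part (b)---the equivariant $\overline{\partial}_K\partial_K$-lemma---via Hodge theory on $M$ equipped with a $K$-invariant K\"ahler metric, and then to deduce (a) from (b) using Proposition \ref{decomposition} together with the Kirwan-Ginzburg equivariant formality theorem. By averaging, one may take the K\"ahler metric on $M$ to be $K$-invariant, so that all Hodge-theoretic operators ($\overline{\partial}^{*}$, $\partial^{*}$, Green's operator $G_{\overline{\partial}}$, harmonic projection $H$) commute with the $K$-action and the classical $\overline{\partial}\partial$-lemma on $M$ holds $K$-equivariantly.

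For part (b), suppose $\alpha\in\Omega_K(M,\mathbb{C})$ is $\overline{\partial}_K$-closed and $\partial_K$-exact, say $\alpha=\partial_K\beta$. I would construct a preimage $\gamma$ with $\alpha=\overline{\partial}_K\partial_K\gamma$ by induction on the polynomial degree in $S\mathfrak{k}^{*}$. Writing $\beta=\sum_{i}\beta_i$ with $\beta_i\in(S^{i}\mathfrak{k}^{*}\otimes\Omega(M))^{K}$, the base case $i=0$ reduces to the classical $\overline{\partial}\partial$-lemma on $M$. For the inductive step, one solves a $\overline{\partial}$-equation on $M$ via $G_{\overline{\partial}}$, whose source is shown to be $\overline{\partial}$-exact by a further application of the $\overline{\partial}\partial$-lemma on $M$, using the interplay between the contractions $\iota(\xi_{M}^{0,1})$ that appear in $d^{'0,1}$ and the $K$-equivariant Hodge operators.

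For part (a), apply Proposition \ref{decomposition} to the double complex $(\Omega_K(M,\mathbb{C}),\overline{\partial}_K,\partial_K)$, taking (b) as the $dd'$-hypothesis. Proposition \ref{decomposition}(a) identifies $H^{p,q}(\ker\partial_K\cap\Omega_K,\overline{\partial}_K)\cong H^{p,q}_K(M,\mathbb{C})$, and Proposition \ref{decomposition}(b) then yields $\bigoplus_{p+q=r}H^{p,q}_K(M,\mathbb{C})\cong H^{r}_K(M,\mathbb{C})$. Combined with the Kirwan-Ginzburg formality isomorphism $H^{r}_K(M,\mathbb{C})\cong(S\mathfrak{k}^{*})^{K}\otimes H^{r}(M,\mathbb{C})$ and the classical Hodge decomposition $H^{r}(M,\mathbb{C})=\bigoplus_{p+q=r}H^{p,q}_{\overline{\partial}}(M)$, matching the $(S\mathfrak{k}^{*})^{K}$-module structures and tracking the bigrading convention (\ref{dolb-bigrading}) yields the stated bigraded decomposition of (a).

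The main obstacle is the inductive step in (b): at each stage one must verify that a specific $\overline{\partial}$-closed form on $M$ (produced by contraction with $\xi_{M}^{0,1}$ from the previous term) is in fact $\overline{\partial}$-exact. This relies on the $\overline{\partial}\partial$-lemma on $M$ together with the K\"ahler identities and the Hamiltonian structure, since the moment map enters through the interaction of $\iota(\xi_{M}^{0,1})$ with $\partial^{*}$. A cleaner alternative route is to establish equivariant K\"ahler identities directly on $\Omega_K(M,\mathbb{C})$, from which (b) follows formally; this is essentially the approach of Lillywhite and Teleman.
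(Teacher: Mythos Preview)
The paper does not give a proof of this theorem; it is stated with attribution to Lillywhite and Teleman and a citation to \cite[Thm.~5.1]{Lilly03}, so there is nothing in the paper to compare your argument against. That said, a few remarks on your sketch.

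For part (b), your inductive scheme is plausible in outline, but the remark that ``the moment map enters through the interaction of $\iota(\xi_{M}^{0,1})$ with $\partial^{*}$'' is a red flag: the hypothesis is only that the holomorphic action is equivariantly formal, not that it is Hamiltonian, so no moment map is available in general. The route you call ``cleaner'' at the end---establishing equivariant K\"ahler identities directly on $\Omega_K(M,\mathbb{C})$ and deducing (b) formally---is in fact the approach of Lillywhite and Teleman, and it avoids both the induction and the spurious Hamiltonian assumption.

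For part (a), your chain of isomorphisms gives
\[
\bigoplus_{p+q=r} H^{p,q}_K(M,\mathbb{C}) \;\cong\; H^r_K(M,\mathbb{C}) \;\cong\; (S\mathfrak{k}^*)^K\otimes H^r(M,\mathbb{C}) \;\cong\; \bigoplus_{i,\,p'+q'=r-2i}(S^i\mathfrak{k}^*)^K\otimes H^{p',q'}_{\overline{\partial}}(M),
\]
but the phrase ``matching the bigradings'' hides the actual content: the Kirwan--Ginzburg isomorphism is not \emph{a priori} compatible with the $(p,q)$-decomposition, so one cannot simply read off $H^{p,q}_K$ termwise. The standard argument filters $\{\Omega_K^{p,*}(M),\overline{\partial}_K\}$ by polynomial degree in $S\mathfrak{k}^*$; the associated graded has differential $1\otimes\overline{\partial}$, so the $E_1$-page is $\bigoplus_i (S^i\mathfrak{k}^*)^K\otimes H^{p-i,\,*-i}_{\overline{\partial}}(M)$, and one then proves degeneration at $E_1$ (for instance by a dimension count against the $d_K$-spectral sequence, which degenerates by the formality hypothesis, together with the classical Hodge decomposition). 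Your sketch points in this direction but does not make the degeneration step explicit.
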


Now set $\Omega_{K,\partial_K}(M)=\Omega_K(M)\cap \text{ker}\partial_K$. Since $\overline{\partial}_K$ anti-commutes with $\partial_K$, we get a differential complex $\{\Omega_{K,\partial_K}^{p,*}(M), \overline{\partial}_K\}$.
%\begin{equation}\label{eq-Dolbeaut2} \cdots\xrightarrow{\overline{\partial}_K}\Omega_{K,\partial_K}^{p,q}(M)\xrightarrow{\overline{\partial}_K}\Omega_{K,\partial_K}^{p,q+1}(M)\xrightarrow{\overline{\partial}_K}\Omega_{K,\partial_K}^{p,q+2}(M)\xrightarrow{\overline{\partial}_K}\cdots \end{equation}
The $q$-th cohomology of this differential complex will be denoted by $H(\Omega_{K,\partial_K}^{p,q}(M),\overline{\partial}_K)$.

%Clearly, the natural inclusion map $\Omega_{K,\partial_K}(M)\hookrightarrow \Omega_K(M)$ gives rise to morphisms of differential complex
%\begin{equation}\label{morphism} (\Omega_K(M), \overline{\partial}_K(M))\hookleftarrow (\Omega_{K,\partial_K}(M), \overline{\partial}_K)\hookrightarrow (\Omega_K(M), d_K).\end{equation}

As an immediate consequence of Theorem \ref{formality}, Proposition \ref{decomposition}, and Theorem \ref{eq-formality1}, we have the following result.
\begin{theorem}\label{eq-hodge-decom} \begin{itemize} For the holomorphic Hamiltonian action of a compact Lie group $K$ 
on a compact K\"ahler manifold $(M,\omega)$, the following properties hold true.

 \item[a)] The homomorphism
\begin{equation}\label{iso-1} H^{p,q}(\Omega_{K,\partial_K}(M),\overline{\partial}_K)\rightarrow H^{p,q}_K(M).\end{equation}
 induced by the inclusion $\{\Omega_{K,\partial_K}(M), \overline{\partial}_K\}\hookrightarrow \{\Omega_K(M), \overline{\partial}_K\}$ is an 
 isomorphism.
 \item[b)] The homomorphism
\begin{equation}\label{iso-2} \displaystyle \bigoplus_{p+q=r} H^{p,q}(\Omega_{K,\partial_K}(M),\overline{\partial}_K)\rightarrow H_K^r(M,\mathbb{C}).\end{equation}
 induced by the inclusion $\{\Omega_{K,\partial_K}(M), \overline{\partial}_K\}\hookrightarrow \{\Omega_K(M), d_K\}$ is an 
 isomorphism. Thus the data $(H^r_K(M,\mathbb{R}), H^{p,q}(\Omega_{K,\partial_K}(M))$ defines a (pure) real Hodge structure of weight $r$.
\end{itemize}
 \end{theorem}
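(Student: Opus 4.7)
The plan is to apply Proposition \ref{decomposition} directly to the double complex $(\Omega_K^{**}(M,\mathbb{C}), \overline{\partial}_K, \partial_K)$, with the bigrading as in (\ref{dolb-bigrading}), designating $d := \overline{\partial}_K$ as the vertical differential and $d' := \partial_K$ as the horizontal one. Under this identification $D = d + d'$ is the equivariant exterior derivative $d_K$, the subcomplex $K_{d'}$ is exactly $\Omega_{K,\partial_K}(M)$, and the two homomorphisms (\ref{homo1}) and (\ref{homo2}) produced by Proposition \ref{decomposition} coincide with the maps (\ref{iso-1}) and (\ref{iso-2}) to be shown isomorphisms; so (a) and (b) would follow respectively from parts (a) and (b) of Proposition \ref{decomposition}.

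Before invoking Proposition \ref{decomposition} I would verify its three hypotheses. Anti-commutativity of $\overline{\partial}_K$ and $\partial_K$ is already recorded at the start of Section \ref{eq-dolbeault}. Boundedness is routine: for each fixed total degree $r$, the summand $\Omega_K^{p,q}(M)$ is nonzero only when some index $i \geq 0$ satisfies $0 \leq p - i \leq n$ and $0 \leq q - i \leq n$ with $n = \dim_{\mathbb{C}} M$, and together with $p + q = r$ this confines $(p,q)$ to a finite set. The genuine input — and the only step requiring real content — is the $dd'$-lemma. Here I would appeal to Theorem \ref{eq-formality1}(b), which supplies exactly $\ker\overline{\partial}_K \cap \im\partial_K = \im\overline{\partial}_K \cap \ker\partial_K = \im(\overline{\partial}_K\partial_K)$; Theorem \ref{eq-formality1} requires equivariant formality, but this is precisely what the Kirwan--Ginzburg Theorem \ref{formality} guarantees in the Hamiltonian setting.

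With Proposition \ref{decomposition} applied, (a) and (b) follow immediately, as indeed the statement is advertised as an immediate consequence. To upgrade the decomposition in (b) to a genuine real Hodge structure of weight $r$, I would additionally verify the conjugation symmetry $\overline{H^{p,q}(\Omega_{K,\partial_K}(M),\overline{\partial}_K)} = H^{q,p}(\Omega_{K,\partial_K}(M),\overline{\partial}_K)$. Complex conjugation on $\Omega_K(M,\mathbb{C})$ interchanges $\Omega_K^{p,q}$ with $\Omega_K^{q,p}$ and swaps $\partial_K$ with $\overline{\partial}_K$, so it identifies the left-hand side with $H^{q,p}(\Omega_{K,\overline{\partial}_K}(M),\partial_K)$; a symmetric reapplication of Proposition \ref{decomposition} with the roles of $d$ and $d'$ swapped — whose hypotheses are equally satisfied since the $\overline{\partial}_K\partial_K$-lemma is symmetric in its two differentials — identifies this in turn with the $(q,p)$-summand of $H_K^r(M,\mathbb{C})$, yielding the required symmetry. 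I do not anticipate any serious obstacle; the whole argument is bookkeeping around the $\overline{\partial}_K\partial_K$-lemma, whose availability has already been secured by Theorem \ref{eq-formality1}.
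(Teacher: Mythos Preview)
Your proposal is correct and matches the paper's own argument exactly: the paper states the theorem as an immediate consequence of Theorem~\ref{formality}, Proposition~\ref{decomposition}, and Theorem~\ref{eq-formality1}, and you have simply unpacked that chain of implications, verifying the hypotheses of Proposition~\ref{decomposition} along the way. Your additional check of conjugation symmetry for the Hodge-structure claim is more explicit than anything the paper writes down, but it is the natural completion of the argument and raises no issues.
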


\begin{definition} \label{eq-hodge-number} The dimension of the complex vector space $H^{p,q}_K(M,\mathbb{C})$, denoted by $h_K^{p,q}(M)$, is defined to be the \emph{equivariant Hodge number} of the $K$-manifold $M$.
\end{definition}

\section{Transversely K\"ahler foliations}\label{foliation}
Let $\mathcal{F}$ be a foliation on a smooth manifold $Z$. Throughout this paper we will denote by $\mathfrak{X}(\mathcal{F})$ the space of smooth vector fields which are tangent to the leaves of $\mathcal{F}$, and by $T\mathcal{F}$ the tangent bundle of the foliation.  We say that a vector field $X$ on $Z$ is \emph{foliate}, if $[X,Y]\in\mathfrak{X}(\mathcal{F})$, for all $Y\in\mathfrak{X}(\mathcal{F})$. We will denote by $\mathfrak{X}(Z,\mathcal{F})$ the space of foliate vector fields on the foliated manifold $(Z,\mathcal{F})$. Clearly we have that $\mathfrak{X}(\mathcal{F})\subset \mathfrak{X}(Z,\mathcal{F})$.  A \emph{transverse vector field} is an equivalence class in the quotient space $\mathfrak{X}(Z,\mathcal{F})/\mathfrak{X}(\mathcal{F})$.
The space of transverse vector fields, denoted by $\mathfrak{X}(Z/\mathcal{F})$,  is a Lie algebra with
a Lie bracket induced from that of $\mathfrak{X}(Z,\mathcal{F})$

The space of \emph{basic forms} on $Z$ is defined to be
\[
\Omega_{bas}(Z)=\bigl\{\alpha\in\Omega(Z)\,|\,\iota(X)\alpha=\mathcal{L}(X)\alpha=0,\,\forall\,X\in\mathfrak{X}(\mathcal{F})\bigr\}.
\]
Since the exterior differential operator $d$ preserves basic forms, we obtain a sub-complex  $\{\Omega^{*}_{bas}(Z),d\}$ of the de Rham complex, called the \emph{basic de Rham complex}.
The associated cohomology $H^{*}_B(Z)$
is called the \emph{basic cohomology}. Let $q$ be the codimension of the foliation $\mathcal{F}$.
If $H^{q}_B(Z)=\mathbb{R}$, we say that $\mathcal{F}$ is \emph{homologically orientable}.

Let $Q=TZ/T\mathcal{F}$ be the normal bundle of the foliation. A moment's consideration shows that for any foliate vector field $X$, and for any $(r, s)$-type tensor \[\sigma\in C^{\infty}(\underbrace{Q^*\otimes\cdots\otimes Q^*}_{r}\otimes\underbrace{Q\otimes\cdots \otimes Q}_s),\] the Lie derivative $\mathcal{L}_X\sigma$ is well defined.

\begin{definition} A \emph{transverse Riemannian metric} on a foliation $(Z,\mathcal{F})$ is a Riemannian metric $g$ on the normal bundle $Q$ of the foliation, such that $\mathcal{L}_Xg=0$,  for all $X\in\mathfrak{X}(\mathcal{F})$. We say that $\mathcal{F}$ is a Riemannian foliation if there exists a transverse Riemannian metric on $(Z,\mathcal{F})$.
\end{definition}
\begin{definition}
A  \emph{transverse almost complex structure} $\mathcal{J}$ on $(Z,\mathcal{F})$ is an almost complex structure $\mathcal{J}: TZ/T\mathcal{F}\rightarrow TZ/T\mathcal{F}$ such that
$\mathcal{L}_X \mathcal{J}=0$, for all $X\in \mathfrak{X}(\mathcal{F})$.
A  transverse almost complex structure $\mathcal{J}$ on $(Z,\mathcal{F})$ is said to be \emph{integrable}, if
 for all $p\in Z$, there exists an open neighborhood $U$ of $p$, such that for any two transverse vector fields $X$ and $Y$  on $U$ with respect to the foliation $\mathcal{F}\vert_U$, the Nijenhaus tensor
  $
  N_{\mathcal{J}}(X,Y)=[\mathcal{J}X,\mathcal{J}Y]-\mathcal{J}[\mathcal{J}X,Y]-\mathcal{J}[X, \mathcal{J}Y]-[X,Y]
  $
   vanishes. An integrable transverse almost complex structure is also called a \emph{transverse complex structure}.
The foliation $\mathcal{F}$ is said to be \emph{transversely holomorphic} if there is a transverse complex structure $\mathcal{J}$ on $(Z, \mathcal{F})$.\end{definition}

Throughout the rest of this section, assume that $\mathcal{F}$ is a foliation on a manifold $Z$ endowed with a transverse almost complex structure $\mathcal{J}$. To simplify notations, we will also denote by $\Omega_{ bas}(Z)$ and $\Omega_{hor}(Z)$ the space of complex basic differential forms and complex horizontal forms respectively. Now let $Q_{\mathbb{C}}$ be the complexification of the normal bundle $Q$ of the foliation.  Then $\mathcal{J}$ determines a decomposition of $\wedge^r Q_{\mathbb{C}}^*$ as follows.
\begin{equation}\label{basic-form-decom1}\wedge^r Q_{\mathbb{C}}^*=\displaystyle \bigoplus_{p+q=r} (\wedge^p Q^{*'})\otimes (\wedge^q Q^{*''}), \end{equation} where $Q^{*'}$ are $Q^{*''}$ are $\sqrt{-1}$ and $-\sqrt{-1}$-eigenbundle of $\mathcal{J}$ respectively. However, it is clear that there is  a natural isomorphism 
\begin{equation}\label{injection}\Omega_{hor}^r(Z) \rightarrow C^{\infty}(\wedge^r Q_{\mathbb{C}}^*).\end{equation}
Thus (\ref{basic-form-decom1} ) together with (\ref{injection}) induces the following decomposition of $\Omega^r_{hor}(Z)$.\begin{equation}\label{basic-form-decom2} \Omega^r_{hor}(Z) =\displaystyle \bigoplus_{p+q=r}\Omega^{p,q}_{hor}(Z).\end{equation}
\begin{definition}\label{dbar-operator}  Let $\alpha$ be a complex basic form of type $(p, q)$.  In view of the direct sum decomposition (\ref{basic-form-decom2}), define  $\overline{\partial}\alpha$ to be the $(p, q+1)$ component of $d\alpha$, and $\partial\alpha$ the $(p+1, q)$ component of $d\alpha$.
\end{definition} 

The proof of the following fact is analogous to the case of complex manifolds, and will be left as an exercise.
\begin{lemma} The transverse almost complex structure $\mathcal{J}$ is integrable if and only if $d=\overline{\partial} + \partial$.
In particular, when $\mathcal{J}$ is integrable, we have that 
\[\overline{\partial}^2=0,\,\,\overline{\partial}\partial+\partial \overline{\partial}=0, \,\,\partial^2=0.\]

\end{lemma}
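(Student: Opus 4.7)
The plan is to adapt the standard Newlander--Nirenberg-flavored argument from complex manifolds, taking care that every computation descends from foliate representatives to the normal bundle $Q$. Throughout I will use the decomposition of the complexified normal bundle $Q_{\mathbb{C}} = Q' \oplus Q''$ into the $\pm\sqrt{-1}$-eigenbundles of $\mathcal{J}$. Since $\mathcal{L}_W \mathcal{J} = 0$ for every $W \in \mathfrak{X}(\mathcal{F})$, this decomposition is preserved by foliate vector fields and is well defined on transverse classes.

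First I would reformulate transverse integrability in involutivity terms. Polarising $N_{\mathcal{J}}(X,Y)$ on the complex combinations $X - \sqrt{-1}\,\mathcal{J}X$ shows that $\mathcal{J}$ is integrable if and only if $Q''$ is involutive in the transverse sense: whenever $U, V \in \mathfrak{X}(Z,\mathcal{F})$ have classes $[U], [V] \in Q''$, the bracket $[U,V]$ again has class in $Q''$. This is a routine algebraic identity and mirrors the analogous reformulation on complex manifolds.

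Next I would analyze the splitting of $d$ on the basic complex. Given $\alpha \in \Omega^{p,q}_{bas}(Z)$, the form $d\alpha$ is again basic, and by (\ref{basic-form-decom2}) its only possible bigraded components are of bitypes $(p+2,q-1)$, $(p+1,q)$, $(p,q+1)$, $(p-1,q+2)$. To isolate the extremal piece $(d\alpha)^{p-1,q+2}$, evaluate $d\alpha$ on $p-1$ foliate lifts of $Q'$-sections and $q+2$ foliate lifts of $Q''$-sections using Cartan's formula. All direct terms $X_i(\alpha(\ldots))$ vanish because the remaining inputs cannot match the type $(p,q)$ of $\alpha$; the bracket terms $\alpha([X_i,X_j],\ldots)$ can contribute only when both removed vectors are $Q''$-lifts, in which case their survival is precisely obstructed by the $Q'$-component of $[X_i,X_j]$ — ruled out by the involutivity established above. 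A symmetric argument kills $(d\alpha)^{p+2,q-1}$, so integrability yields $d = \partial + \overline{\partial}$ on $\Omega_{bas}(Z)$. Conversely, specialising to a basic $(1,0)$-form $\alpha$ and two foliate $Q''$-lifts $X, Y$, the identity $d\alpha(X,Y) = 0$ reduces via Cartan's formula to $\alpha([X,Y]) = 0$; running over all basic $(1,0)$ forms forces the class of $[X,Y]$ into $Q''$, hence integrability.

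For the concluding identities, I would expand $d^2 = 0$ applied to $\alpha \in \Omega^{p,q}_{bas}(Z)$ and separate into bitypes: the $(p+2,q)$, $(p+1,q+1)$, and $(p,q+2)$ components give $\partial^2 = 0$, $\partial\overline{\partial} + \overline{\partial}\partial = 0$, and $\overline{\partial}^2 = 0$ respectively. The main obstacle is the bookkeeping in the middle paragraph: verifying that distinct foliate lifts of the same transverse vector — which differ by an element of $\mathfrak{X}(\mathcal{F})$ — yield the same answer in Cartan's formula, because basic forms annihilate $T\mathcal{F}$ and are invariant under $\mathcal{L}_W$ for $W \in \mathfrak{X}(\mathcal{F})$. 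Once that check is in place, the rest of the argument is a direct transcription of the complex manifold case.
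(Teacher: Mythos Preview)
Your proposal is correct and is precisely the adaptation of the classical complex-manifold argument that the paper has in mind: the paper does not give a proof at all, stating only that it ``is analogous to the case of complex manifolds, and will be left as an exercise.'' Your sketch fills in that exercise faithfully, including the necessary foliate bookkeeping (well-definedness on transverse classes, local existence of enough basic $(1,0)$-forms via a foliation chart) that distinguishes the transverse setting from the usual one.
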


\begin{definition} Assume that $\mathcal{F}$ is a transversely holomorphic foliation on $Z$. The \emph{basic Dolbeault cohomology} of $Z$, denoted by $H_{B,\overline{\partial}}^{p,*}(Z)$, is defined to be the cohomology of the differential complex
$\{\Omega_{bas}^{p,*}(Z),\overline{\partial}\}$.
%\begin{equation}\label{basic-Dolbeaut} \cdots\xrightarrow{\overline{\partial}} \Omega_{bas}^{p,q}(M)\xrightarrow{\overline{\partial}}\Omega_{bas}^{p,q+1}(M)\xrightarrow{\overline{\partial}}\Omega_{bas}^{p,q+2}(M) \xrightarrow{\overline{\partial}}\cdots \end{equation}
The dimension of the complex vector space $H^{p,q}_{B,\overline{\partial}}(Z)$ is defined to be the basic Hodge number $h^{p,q}_B$ of the transversely holomorphic foliation $(Z, \mathcal{F})$.

 \end{definition}

\begin{definition}\label{kahler-foliation}
A \emph{transverse K\"{a}hler structure} on $(Z,\mathcal{F})$ consists of a transverse complex structure $\mathcal{J}$ and a transverse Riemannian metric $g$,  such that
 the tensor field $\omega$ defined by $\omega(X,Y)=g(X,\mathcal{J}Y)$ is anti-symmetric and closed when considered as a 2-form on $Z$ given by the injection $\bigwedge^{2}Q^*\rightarrow\bigwedge^{2}T^{*}Z$.
The $2$-form $\omega$  will be called a \emph{transverse K\"{a}hler form}.  $\mathcal{F}$ is said to be a transversely K\"ahler foliation if there exists a transverse K\"ahler structure on $(Z,\mathcal{J})$.
\end{definition}

The following result is due to El Kacimi \cite{KA90}.

\begin{theorem}\label{basic-ddbar-lemma} Suppose that $\mathcal{F}$ is a homologically orientable transversely K\"ahler foliation on a compact manifold $Z$. Then on the space of basic forms $\Omega_{bas}(Z)$ the following $\overline{\partial}\partial$-lemma holds.
\[\text{ker}\,\overline{\partial} \cap \text{im}\,\partial=\text{im}\, \overline{\partial}\cap \text{ker}\,\partial=\text{im}\,\overline{\partial}\partial.\]

\end{theorem}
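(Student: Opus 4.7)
The plan is to adapt the classical Hodge-theoretic proof of the $\partial\overline{\partial}$-lemma to the basic setting of a homologically orientable transversely K\"ahler foliation.

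First, I would set up basic Hodge theory. The homological orientability hypothesis $H^q_B(Z)=\mathbb{R}$ together with the transverse Riemannian metric underlying the transverse K\"ahler structure yields a globally well-defined basic Hodge star operator on $\Omega_{bas}(Z)$, which in turn gives formal $L^2$-adjoints $d^{*}$, $\partial^{*}$, $\overline{\partial}^{*}$. The associated basic Laplacians are transversely elliptic, and El Kacimi-Alaoui's theorem produces a basic Hodge decomposition
\[
\Omega_{bas}^{p,q}(Z)=\mathcal{H}_{B}^{p,q}(Z)\oplus\overline{\partial}\,\Omega_{bas}^{p,q-1}(Z)\oplus\overline{\partial}^{*}\Omega_{bas}^{p,q+1}(Z),
\]
with finite-dimensional harmonic spaces and a Green operator $G$ that commutes with $\overline{\partial}$ and $\overline{\partial}^{*}$.

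Next, I would establish the basic K\"ahler identities. Since these are pointwise algebraic statements depending only on the transverse Hermitian structure, the classical Nakano--Kodaira identities transfer verbatim to basic forms: with $L=\omega\wedge$ and $\Lambda$ its $L^2$-adjoint,
\[
[\Lambda,\overline{\partial}]=-\sqrt{-1}\,\partial^{*},\qquad [\Lambda,\partial]=\sqrt{-1}\,\overline{\partial}^{*}.
\]
A direct consequence is $\Delta_{B}=2\Delta_{B,\partial}=2\Delta_{B,\overline{\partial}}$ on basic forms, so that the three notions of basic harmonic form coincide; in particular every basic harmonic form is simultaneously closed and co-closed for $d$, $\partial$, and $\overline{\partial}$, and $G$ also commutes with $\partial$ and $\partial^{*}$.

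Then I would run the classical argument on basic forms. Suppose $\alpha\in\Omega^{p,q}_{bas}(Z)$ satisfies $\overline{\partial}\alpha=0$ and $\alpha=\partial\beta$ for some basic $\beta$. Applying the basic Hodge decomposition,
\[
\beta=H\beta+\overline{\partial}\,\overline{\partial}^{*}G\beta+\overline{\partial}^{*}\overline{\partial}\,G\beta,
\]
so that $\partial H\beta=0$ (harmonicity) gives $\alpha=-\overline{\partial}\partial(\overline{\partial}^{*}G\beta)+\partial\overline{\partial}^{*}\overline{\partial}\,G\beta$. Using the K\"ahler identity $\partial\overline{\partial}^{*}+\overline{\partial}^{*}\partial=0$ and the commutations $G\partial=\partial G$, $G\overline{\partial}=\overline{\partial}G$, the last term equals $\overline{\partial}^{*}\overline{\partial}\,G\partial\beta=\overline{\partial}^{*}G\,\overline{\partial}\alpha=0$. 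Hence $\alpha\in\operatorname{im}(\overline{\partial}\partial)$, proving $\operatorname{im}\partial\cap\ker\overline{\partial}\subset\operatorname{im}\,\overline{\partial}\partial$. The opposite inclusion is immediate, and the case $\operatorname{im}\overline{\partial}\cap\ker\partial$ is symmetric.

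The principal obstacle is Step~1. Unlike the ordinary de Rham situation, the basic Laplacian is only transversely elliptic, and obtaining closed range, a finite-dimensional harmonic space, and a Green operator with the expected commutation properties is subtle: precisely the homological orientability hypothesis is what ensures the basic Hodge star is globally defined and the formal adjoints behave as in the classical compact K\"ahler case. Once this analytic foundation of El Kacimi-Alaoui is invoked, Steps~2 and~3 are mechanical transcriptions of pointwise identities, and Step~4 is the classical argument transplanted without change to the basic complex.
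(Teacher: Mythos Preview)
The paper does not supply its own proof of this theorem; it simply records the statement and attributes it to El Kacimi \cite{KA90}. Your proposal is a faithful outline of the argument behind that reference: El Kacimi's transverse ellipticity gives a basic Hodge decomposition with Green operator, the K\"ahler identities are pointwise and hence transfer to basic forms, the three basic Laplacians coincide, and then the classical $\partial\overline{\partial}$-argument runs unchanged. So your approach is correct and is essentially what the cited paper does; the present paper just quotes the result.

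One small correction worth making: homological orientability is not what makes the basic Hodge star globally defined---a bundle-like metric together with transverse orientability already does that. Its actual role (via Masa's tautness criterion and a Rummler--Sullivan choice of bundle-like metric with vanishing mean curvature) is to ensure that the formal transverse adjoint $\delta_B=\pm\bar{*}\,d\,\bar{*}$ is the genuine $L^2$-adjoint of $d$ on basic forms, so that the basic Laplacian is self-adjoint and El Kacimi's Hodge decomposition and Green operator exist with the commutation properties you use. With that clarification, your sketch is sound.
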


\section{Kirwan map and the K\"ahler quotients}\label{hodge-structure}

%In this section, we reinterpret the construction of K\"ahler quotient using the framework of transversely K\"ahler foliations. 
%\subsection{K\"ahler reduction}
%\subsection{K\"ahler reduction}
Let $(M,\omega, \mathcal{J})$ be a K\"ahler manifold with a K\"ahler two form $\omega$ and a compatible integrable almost complex structure $\mathcal{J}$, and let $g(\cdot, \cdot):=\omega(\cdot,\mathcal{J}\cdot)$ be the associated K\"aher metric.
Assume that there is a Hamiltonian action of a Lie group $K$ (not necessarily compact) on $(M, \omega)$ with a moment map $\Phi: M\rightarrow \mathfrak{k}^*$, where $\mathfrak{k}^*$ is the dual space of  $\mathfrak{k}:=\text{ Lie}(K)$, that the action is also holomorphic, and that $0\in \mathfrak{k}^*$ is a regular value of the moment map. 

By assumption,  the level set $Z:=\Phi^{-1}(0)$ is an embedded submanifold of $M$ on which the action of $K$ is locally free. Thus the $K$-action generates a regular foliation $\mathcal{F}$ on $Z$. Let $V=T\mathcal{F}$, and for $\xi\in\mathfrak{k}$, 
let $\xi_Z$ be the fundamental vector field on $Z$ generated by $\xi\in \mathfrak{k}$. Then by definition, for all
$z\in Z$, $V_z=\text{span}\{\xi_{Z,z}\,\vert\, \xi\in \mathfrak{k}\}$. It follows easily from the Hamiltonian equation (\ref{Hamiltonian-eq}) that $V_z=\text{ker}\,(\omega\vert_Z)_z$. As a result, for $ X\in V_z$, and for $Y\in T_z Z$, we have that $g(Y, \mathcal{J}X)=\omega(Y, \mathcal{J}^2 X)=-\omega(Y, X)=0$.

 We have thus proved that $\mathcal{J}V$ is orthogonal to $TZ$ in $TM\vert_Z$. A simple dimension count shows that the subbundle $\mathcal{J}V$ is the orthogonal complement of $TZ$ in $TM\vert_Z$. Let $E$ be the orthogonal complement of $V$ in $T Z$. Then $E$ is the orthogonal complement of $W:=V\oplus \mathcal{J}V$ in $TM\vert_Z$. Since both $W$ and $g$ are invariant under the action of $\mathcal{J}$ and $K$, we obtain a $K$-invariant almost complex structure $\mathcal{J}$ on $E$.

Now let $T_{\mathbb{C}}M$ and $E_{\mathbb{C}}$ be the complexfication of $TM$ and $E$ respectively, and let $E^{1,0}_{\mathbb{C}}$ and $E^{0,1}_{\mathbb{C}}$ be the $\sqrt{-1}$-eigenbundle and $-\sqrt{-1}$-eigenbundle respectively of the almost complex structure $\mathcal{J}: E_{\mathbb{C}}\rightarrow E_{\mathbb{C}}$. We say that a complex tangent vector $A+\sqrt{-1}B$ is tangent to $Z$, where $A,B\in T_zM$, $z\in Z$, if both $A$ and $B$ lie in $T_zZ$.  We first make the following simple observations.
\begin{lemma} \label{observation} \begin{itemize} \item [a)] Suppose that $X\in T^{1,0}_{\mathbb{C},z}(M)$, where  $z\in Z$. Then $X$ is tangent to $Z$ if and only if $X\in E^{1,0}_{\mathbb{C},z}$.
\item[b)] If $X_1, X_2\in C^{\infty}(E_{\mathbb{C}}^{1,0})$, then 
$[X_1,X_2]\in C^{\infty}(E_{\mathbb{C}}^{1,0})$. \end{itemize}
\end{lemma}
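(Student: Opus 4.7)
The plan is to exploit the orthogonal decomposition
$TM\vert_Z = V\oplus \mathcal{J}V\oplus E$
that the author has just established. The essential features to use are that $T_zZ = V_z\oplus E_z$, that $\mathcal{J}V$ is the normal bundle of $Z$ in $M$, and that both $W=V\oplus\mathcal{J}V$ and $E$ are $\mathcal{J}$-invariant.

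For part (a), I would extend $\mathcal{J}$ $\mathbb{C}$-linearly to $T_{\mathbb{C}}M$ and use the standard characterization of $(1,0)$ vectors: $X=A+\sqrt{-1}B$ with $A,B\in T_zM$ lies in $T^{1,0}_{\mathbb{C},z}M$ if and only if $\mathcal{J}A=-B$ and $\mathcal{J}B=A$. Consequently, such an $X$ is tangent to $Z$ precisely when both $A$ and $\mathcal{J}A$ lie in $T_zZ$. Decomposing $A=A_V+A_E$ with $A_V\in V_z$ and $A_E\in E_z$, one has $\mathcal{J}A=\mathcal{J}A_V+\mathcal{J}A_E$ with $\mathcal{J}A_V\in \mathcal{J}V_z$ and $\mathcal{J}A_E\in E_z$; since $\mathcal{J}V_z\cap T_zZ=0$, the requirement $\mathcal{J}A\in T_zZ$ forces $A_V=0$. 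Hence $A\in E_z$ and $B=-\mathcal{J}A\in E_z$, giving $X\in E^{1,0}_{\mathbb{C},z}$. The converse is immediate because $E\subset TZ$.

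For part (b), the key is that by (a), any section $X$ of $E_{\mathbb{C}}^{1,0}$ over $Z$ is a vector field tangent to $Z$ whose value at every point is a $(1,0)$-vector of $M$. Since $X_1,X_2$ are tangent to $Z$, their bracket $[X_1,X_2]$ is automatically tangent to $Z$. To see that it is also of type $(1,0)$, I would locally extend $X_1,X_2$ to smooth sections $\tilde X_1,\tilde X_2$ of $T^{1,0}_{\mathbb{C}}M$ defined in an open neighborhood of $Z$ in $M$ (possible since $Z$ is an embedded submanifold and $T^{1,0}_{\mathbb{C}}M$ is a smooth subbundle). Integrability of $\mathcal{J}$ on $M$ (Newlander--Nirenberg) then forces $[\tilde X_1,\tilde X_2]\in C^{\infty}(T^{1,0}_{\mathbb{C}}M)$, and since the original fields are tangent to $Z$, the restriction $[\tilde X_1,\tilde X_2]\vert_Z$ agrees with the intrinsic bracket $[X_1,X_2]$ on $Z$. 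Applying (a) pointwise then puts $[X_1,X_2]$ in $C^{\infty}(E^{1,0}_{\mathbb{C}})$.

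I do not expect a serious obstacle here: part (a) is essentially linear algebra once the decomposition is in hand, and part (b) combines Newlander--Nirenberg on the ambient complex manifold with the elementary fact that brackets of fields tangent to a submanifold stay tangent. The only subtle point worth writing carefully is the existence and irrelevance-of-choice of the $(1,0)$-extensions $\tilde X_i$, which is a standard consequence of smooth tubular neighborhoods together with the fact that brackets of fields tangent to $Z$ depend only on their values along $Z$.
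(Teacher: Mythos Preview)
Your proposal is correct. Part (b) is essentially the paper's own argument: extend to the ambient $M$, use integrability of $\mathcal{J}$ (the paper phrases this via the vanishing Nijenhuis tensor, you via Newlander--Nirenberg / involutivity of $T^{1,0}$, which is the same thing), observe that brackets of fields tangent to $Z$ stay tangent, and invoke (a).

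Part (a), however, takes a genuinely different route. The paper decomposes a $(1,0)$-vector as $X=X_1+(\xi_{Z,z}-\sqrt{-1}\,\mathcal{J}\xi_{Z,z})$ with $X_1\in E^{1,0}_{\mathbb{C},z}$ and then pairs with $d\Phi^{\xi}$, using the Hamiltonian equation to compute $\langle \mathcal{J}\xi_{Z,z},d\Phi^{\xi}\rangle=g(\xi_{Z,z},\xi_{Z,z})$ and conclude $\xi_{Z,z}=0$. Your argument is purely linear-algebraic: you use only the already-established facts that $T_zZ=V_z\oplus E_z$, that $\mathcal{J}V_z$ is the normal space to $Z$, and that $E$ is $\mathcal{J}$-invariant, never touching the moment map again. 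Your approach is more elementary and makes transparent that (a) is a statement about the $\mathcal{J}$-invariant splitting rather than about Hamiltonian geometry per se; the paper's approach has the minor advantage of being self-contained at that spot (it re-derives the needed transversality via $d\Phi^{\xi}$ rather than citing the preceding paragraph).
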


\begin{proof}\begin{itemize}\item [a)] Suppose that $X\in T^{1,0}_{\mathbb{C},z}(M)$ is tangent to $Z$. Since $X$ is of $(1,0)$ type,  we can write $X$ as $X= X_1+(\xi_{Z,z}-\mathbf{i}\mathcal{J}\xi_{Z,z})$ for $X_1\in E^{1,0}_{\mathbb{C},z}$ and $\xi\in \mathfrak{k}$. Since $X$, $X_1$ and $\xi_{Z,z}$ are all tangent to $Z$,  we must have that $<X, d\Phi^{\xi}>=<X_1, d\Phi^{\xi}>=<\xi_{Z,z}, d\Phi^{\xi}>=0$. It follows that
\[ 0=<\mathbf{i}\mathcal{J}\xi_{Z,z}, d\Phi^{\xi}>=-\mathbf{i}\omega(\xi_{Z,z}, \mathcal{J}\xi_{Z,z})=-\mathbf{i}g(\xi_{Z,z},\xi_{Z,z}).\]
So $\xi_{Z,z}=0$, and $X=X_1\in E^{1,0}_{\mathbb{C},z}$. The other direction is obvious. 

\item[b)] Extend $X_i$ to a vector field $\widetilde{X}_i$ on an open neighborhood $U$ of $Z$, $i=1, 2$.  Since $\mathcal{J}$ is an integrable almost complex structure, it follows from the vanishing of $N_\mathcal{J}(\tilde{X}_1, \tilde{X}_2)$  that $\mathcal{J}[X_1,X_2]=\sqrt{-1}[X_1,X_2]$ on $Z$.  Thus $[X_1, X_2]$ is a vector field of type $(0,1)$. 
Since $Z$ is a submanifold of $M$, and since both $X_1$ and $X_2$ are tangent to $Z$, we must have that $[X_1,X_2]$ is tangent to $Z$. Therefore by Part a) of Lemma \ref{observation},  $[X_1,X_2]$ must be a section of $E^{1,0}_{\mathbb{C}}$.

\end{itemize}

\end{proof}

\begin{proposition}\label{transverse-kahler-foliation} The foliation $\mathcal{F}$ induced by the action of $K$ on $Z=\Phi^{-1}(0)$ is transversely K\"ahler. Moreover, if we assume that the moment map is proper, and that the Lie group $K$ is compact, then the foliation $\mathcal{F}$ is also homologically orientable.
\end{proposition}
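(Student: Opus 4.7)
The plan is to construct the transverse K\"ahler structure directly from the ambient K\"ahler data, using the $K$-invariant orthogonal splitting $TZ = V \oplus E$ established above. I would identify the normal bundle $Q = TZ/V$ with $E$ and transport $g|_E$ and $\mathcal{J}|_E$ to $Q$, obtaining a transverse Riemannian metric $g_T$ and a transverse almost complex structure $\mathcal{J}_T$. Because every leafwise vector field is a smooth combination of fundamental fields for the $K$-action, and the $K$-action preserves both $g$ and $\mathcal{J}$, the Lie derivatives $\mathcal{L}_X g_T$ and $\mathcal{L}_X \mathcal{J}_T$ vanish for all $X \in \mathfrak{X}(\mathcal{F})$; in particular $\mathcal{F}$ is a Riemannian foliation.

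To check that $\mathcal{J}_T$ is integrable, I would invoke Lemma \ref{observation}(b). Local freeness of the $K$-action yields local slices, and hence local foliate sections of $E^{1,0}_{\mathbb{C}}$ that frame $Q^{1,0}$ pointwise. For two such sections $X_1, X_2$, the lemma gives $[X_1, X_2] \in C^{\infty}(E^{1,0}_{\mathbb{C}})$, whose image in $Q^{1,0}$ computes the bracket of the corresponding transverse vector fields; thus the transverse Nijenhuis tensor of $\mathcal{J}_T$ vanishes.

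For closedness of the associated transverse K\"ahler form, I would use that $V = \ker(\omega|_Z)$ by the Hamiltonian equation, so $\omega|_Z$ is horizontal, and $K$-invariance of $\omega$ makes it basic. On the subbundle $E$ the K\"ahler identity $\omega(X,Y) = g(X, \mathcal{J}Y)$ shows that $\omega|_Z$, viewed as a horizontal $2$-form via the injection $\bigwedge^2 Q^* \hookrightarrow \bigwedge^2 T^*Z$, is exactly the transverse K\"ahler form $\omega_T$ of $(g_T, \mathcal{J}_T)$. Closedness follows from $d(i^*\omega) = i^*(d\omega) = 0$, completing the proof that $(Z, \mathcal{F})$ is transversely K\"ahler.

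For homological orientability, properness of $\Phi$ makes $Z$ compact, and compactness of $K$ together with local freeness of the action makes $M_0 = Z/K$ a compact oriented K\"ahler orbifold carrying the quotient structure induced by $(g_T, \mathcal{J}_T, \omega_T)$. The basic de Rham complex of $(Z, \mathcal{F})$ identifies with the orbifold de Rham complex of $M_0$, giving $H^q_B(Z) \cong H^q_{\mathrm{dR}}(M_0; \mathbb{R})$ with $q = 2(n-k)$ the real transverse dimension. I expect the main obstacle to be a clean treatment of this identification in the presence of nontrivial finite isotropy; I would handle it by arguing directly that $\omega_T^{n-k}$ is a nonvanishing closed basic $q$-form with $\int_{M_0} \omega_T^{n-k} > 0$, so its class in $H^q_B(Z)$ is nonzero, and combined with the one-dimensionality of the top de Rham cohomology of the compact connected oriented orbifold $M_0$, this forces $H^q_B(Z) = \mathbb{R}$.
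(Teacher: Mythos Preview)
Your proposal is correct and follows essentially the same route as the paper: identify $Q$ with $E$ via the projection, push $g|_E$ and $\mathcal{J}|_E$ to $Q$, use $K$-invariance for the transversality conditions, invoke Lemma~\ref{observation}(b) for integrability, and observe that the resulting transverse K\"ahler form is $\omega|_Z$. For homological orientability the paper simply cites the isomorphism $H^{2q}_B(Z)\cong H^{2q}_{\mathrm{dR}}(M_0)\cong\mathbb{R}$ for the compact symplectic orbifold $M_0$; your detour through the explicit class $[\omega_T^{n-k}]$ is harmless but unnecessary, since you still invoke one-dimensionality of the top orbifold cohomology at the end.
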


\begin{proof} Let $Q=TZ/T\mathcal{F}$ be the normal bundle of the foliation, and let $\psi: E\rightarrow Q$ be the restriction to $E$ of the projection map $proj: TZ \rightarrow TZ/T\mathcal{F}$. Clearly $\psi$ is an isomorphism from $E$ to $Q$. Define 
$h=(\psi^{-1})^{*}g\vert_E$, and define $\mathcal{J}: Q\rightarrow Q$ to be the unique bundle map on $Q$ such that 
\[ \mathcal{J} \psi(X)= \psi \mathcal{J}  (X), \,\,\forall\, X\in C^{\infty}(E).\]
Since $g\vert_E$ and $\mathcal{J}: E\rightarrow E$ are $K$-invariant, it is straightforward to check that $h$ is a transverse Riemannian metric, and that $\mathcal{J}: Q\rightarrow Q$ is a transverse almost complex structure. The integrability of $\mathcal{J}:Q\rightarrow Q$ follows easily from Part b) of Lemma \ref{observation}. 

%Since by definition the action of $K$ preserves the K\"ahler metric $g$,  one deduces easily that $\mathcal{F}$ is a Riemannian foliation .  

Now consider the two tensor $\sigma$ on $Q$ given by $\sigma(\cdot, \cdot)=h(\cdot, \mathcal{J}\cdot)$. It is anti-symmetric since $g$ is compatible with $\mathcal{J}$. Moreover, under the injection $\wedge^2 Q^*\rightarrow \wedge^2 T^*Z$ $\sigma$ gets mapped to the closed $2$-form $\omega\vert_Z$.  This proves that $\mathcal{F}$ is transversely K\"ahler. 

%In what follows we will let $2q=\text{codim}(\mathcal{F})$.

Finally, assume that the moment map $\Phi$ is proper, and that the Lie group $K$ is compact. Let $2q=\text{codim}(\mathcal{F})$. Under the assumption, the top basic cohomology $H^{2q}_{\mathbb{R}}(Z,\mathcal{F})$ of $\mathcal{F}$ is naturally isomorphic to the top de Rham cohomology of the quotient space $Z/K$, which is a compact symplectic orbifold. Thus $H^{2q}_{\mathbb{R}}(Z,\mathcal{F})\cong  \mathbb{R}$. This completes the proof of Proposition \ref{transverse-kahler-foliation}. 
\end{proof} 
\begin{remark}\label{homological-orientable}  The claims of Proposition \ref{transverse-kahler-foliation} continue to hold
without assuming that $K$ is compact.  However, the proof would involve the notion of Molino's sheaf,  and more generally, his structure theory for a Riemannian foliation. We refer the interested readers to \cite[Prop. A1]{LY19} for a rigorous proof, and to \cite{Mo88} and \cite{LS18} for a detailed exposition on Molino sheaf and Molino's structure theory of Riemannian foliations.\end{remark}

Let $\Omega_{bas,\partial}=\Omega_{bas}(Z)\cap \text{ker}\, \partial$, and let $H^{p,q}(\Omega_{bas,\partial}, \overline{\partial})$ be the cohomologies associated to the differential complex $\{\Omega^{p,*}_{bas,\partial},\overline{\partial}\}$.
The following result is an easy consequence of Proposition \ref{decomposition}, Theorem \ref{basic-ddbar-lemma}, and Proposition \ref{transverse-kahler-foliation}. 
 %
%.

\begin{corollary}\label{basic-hodge-decom}  Assume that the moment map $\Phi$ is proper, and that the Lie group $K$ is compact. Then we have that
\begin{equation}\label{quasi-iso}  H^{p,q}_{B,\overline{\partial}}(Z)=H^{p,q}(\Omega_{bas,\partial},\overline{\partial}) 
\end{equation}
\begin{equation}\label{basic-hodge-decom} H^r_B(Z) = \displaystyle \bigoplus_{p+q=r} H^{p,q}(\Omega_{bas,\partial},\overline{\partial}) .\end{equation}
In particular, the data $(H^r_B(Z,\mathbb{R}), H^{p,q}(\Omega_{bas,\partial},\overline{\partial}))$ defines a (pure) real Hodge structure of weight $r$.

\end{corollary}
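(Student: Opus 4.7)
The plan is to apply Proposition~\ref{decomposition} to the double complex of complex-valued basic forms $(\Omega_{bas}^{*,*}(Z), \overline{\partial}, \partial)$, taking $d = \overline{\partial}$ as the vertical differential and $d' = \partial$ as the horizontal differential. With this identification, the bigrading of Proposition~\ref{decomposition} matches the standard basic Dolbeault bigrading, and boundedness is automatic since $\Omega_{bas}^{p,q}(Z) = 0$ whenever $p$ or $q$ exceeds the complex codimension of $\mathcal{F}$.

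Next I would verify the hypotheses of Proposition~\ref{decomposition}. Because $\Phi$ is proper, the level set $Z = \Phi^{-1}(0)$ is compact. Proposition~\ref{transverse-kahler-foliation} supplies both the transverse K\"ahler structure on $(Z, \mathcal{F})$ and its homological orientability, so Theorem~\ref{basic-ddbar-lemma} applies and furnishes the basic $\overline{\partial}\partial$-lemma on $\Omega_{bas}(Z)$—precisely the $dd'$-lemma required by Proposition~\ref{decomposition}.

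The two displayed equalities then follow immediately. Part (a) of Proposition~\ref{decomposition} yields
\[H^{p,q}(\Omega_{bas,\partial}, \overline{\partial}) \;\cong\; H^{p,q}(\Omega_{bas}(Z), \overline{\partial}) \;=\; H^{p,q}_{B,\overline{\partial}}(Z),\]
which is (\ref{quasi-iso}). Part (b), combined with the observation that the total differential $D = \overline{\partial} + \partial$ restricted to $\Omega_{bas}(Z)$ is the ordinary exterior derivative $d$, yields
\[\bigoplus_{p+q=r} H^{p,q}(\Omega_{bas,\partial}, \overline{\partial}) \;\cong\; H^r(\Omega_{bas}(Z), d) \;=\; H^r_B(Z, \mathbb{C}),\]
which is (\ref{basic-hodge-decom}).

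The main obstacle beyond invoking the previous results is the conjugation symmetry $\overline{H^{p,q}(\Omega_{bas,\partial}, \overline{\partial})} = H^{q,p}(\Omega_{bas,\partial}, \overline{\partial})$ needed to upgrade the decomposition to a pure real Hodge structure of weight $r$. Complex conjugation on basic forms interchanges $\partial$ with $\overline{\partial}$ and sends bidegree $(p,q)$ to $(q,p)$, so it gives directly $\overline{H^{p,q}(\Omega_{bas,\partial}, \overline{\partial})} \cong H^{q,p}(\Omega_{bas,\overline{\partial}}, \partial)$. I would then run Proposition~\ref{decomposition} a second time with the roles of $\partial$ and $\overline{\partial}$ swapped—the basic $\overline{\partial}\partial$-lemma is manifestly symmetric in the two operators—to obtain the parallel isomorphism $H^{q,p}(\Omega_{bas,\overline{\partial}}, \partial) \cong H^{q,p}_{B,\partial}(Z)$ and the parallel decomposition $H^r_B(Z,\mathbb{C}) = \bigoplus_{p+q=r} H^{q,p}(\Omega_{bas,\overline{\partial}}, \partial)$. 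Matching the two decompositions of $H^r_B(Z,\mathbb{C})$ via conjugation and applying (\ref{quasi-iso}) at both bidegrees then yields the conjugation rule, and hence the desired real Hodge structure.
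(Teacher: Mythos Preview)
Your proposal is correct and follows exactly the route the paper indicates: the corollary is stated there as an immediate consequence of Proposition~\ref{decomposition}, Theorem~\ref{basic-ddbar-lemma}, and Proposition~\ref{transverse-kahler-foliation}, and you have filled in those steps faithfully. Your treatment of the conjugation symmetry for the Hodge-structure claim is sound though slightly more elaborate than needed---one can observe directly that a class in the $(p,q)$ summand is represented by a type $(p,q)$ basic form that is simultaneously $\partial$- and $\overline{\partial}$-closed, and complex conjugation sends such a form to a type $(q,p)$ form with the same property.
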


%Next we note that the $G$-invariant splitting $T_xZ= V_x\oplus H_x$ uniquely determines a $\mathfrak{g}$-valued one form $\theta$ such that
%\begin{itemize}
%\item[a)]$\forall\, \xi\in\mathfrak{g}$, $\theta(\xi_M)=\xi$;
%\item[b)] $\forall\, a\in G$, $(R_a)^*\theta=\text{Ad}(a^{-1})\theta$.
%\end{itemize}
%Here the action of $G$ is assumed to be from the right, and $Ad(a^{-1})$ denotes the adjoint representation of $a^{-1} \in G$ on $\mathfrak{g}$. Clearly, when the action is free instead of being just locally free,  $\theta$ is the canonical $\mathfrak{g}$-valued connection one form  induced by the K\"ahler metric on the principal $G$-bundle $\pi: Z\rightarrow Z/G$. 

%Now choose a basis $\xi_1, \cdots, \xi_k$ in $\mathfrak{g}$.Then we have that
%\[ \theta=\displaystyle \sum_{i} \theta^i\otimes \xi_i.\]
 
%\subsection{Canonical Kirwan map for differential forms} 
 
 Next let $\xi_1, \cdots, \xi_k$ be a basis of $\mathfrak{k}$, and let $\theta^1,\cdots, \theta^k$ be connection $1$-forms on $Z$ determined by the equations \begin{equation}\label{connection-elements} \iota(X)\theta^i=0,\,\forall\, X\in C^{\infty}(E),\,\forall\, 1\leq i\leq k,\,\text{and}\,\iota(\xi_j)\theta^i=\delta_i^j, \,\forall\,1\leq i,j\leq k.\end{equation}

 Then for all $1\leq l\leq k$, we have a curvature two form $\mu^l \in \Omega_{hor}(Z)$ as given in (\ref{curvature-2-form}). Moreover, we also have a well defined projection operator 
 $Hor: \Omega(Z)\rightarrow \Omega_{hor}(Z)$ as given in (\ref{hor-proj}).
 The following lemma is a crucial step towards establishing the main result of this paper.
  
 \begin{lemma} \label{type-1-1-curvature} \begin{itemize}\item [a)] Suppose that $\alpha\in \Omega^{p,q}(M)$. Then 
 $Hor(i^*\alpha)\in\Omega_{hor}^{p, q}(Z)$ is a horizontal form of type $(p,q)$ relative to the direct sum decomposition introduced in (\ref{basic-form-decom2}).
 Here $i^*$ is the pullback on differential forms induced by the inclusion map $i: Z\hookrightarrow M$.
 \item[b)] For all $1\leq l\leq k$, $\mu^l$ is a horizontal form of type $(1,1)$ relative to the bi-grading introduced in
 (\ref{basic-form-decom2}).\end{itemize}
  \end{lemma}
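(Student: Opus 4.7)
For both parts the idea is to translate the ``type'' condition on the foliated side into a statement about the subbundle $E \subset TZ$, and then use Lemma \ref{observation} to move back and forth between $E$-vectors and $(1,0)$/$(0,1)$-vectors of $TM|_Z$. Recall that the projection $\psi: E \to Q$ is an isomorphism which intertwines the restricted complex structure $\mathcal{J}|_E$ with the transverse complex structure on $Q$. Consequently, a horizontal form $\beta$ is of transverse type $(p,q)$ exactly when $\beta$, evaluated on horizontal vectors, is nonzero only on $p$ vectors from $E^{1,0}_{\mathbb{C}}$ and $q$ vectors from $E^{0,1}_{\mathbb{C}}$.

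\textbf{Part (a).} By definition $\mathrm{Hor}(i^*\alpha)$ is the unique horizontal form on $Z$ agreeing with $i^*\alpha$ on horizontal inputs, i.e.\ on vectors in $E$. Take arbitrary $X_1,\dots,X_s \in E^{1,0}_{\mathbb{C}}$ and $Y_1,\dots,Y_t \in E^{0,1}_{\mathbb{C}}$ with $s+t=p+q$. By Lemma \ref{observation}(a) (and its conjugate), each $X_j$ is also a $(1,0)$-vector in $T_z M$ and each $Y_j$ a $(0,1)$-vector in $T_z M$. Since $\alpha \in \Omega^{p,q}(M)$, the value $\alpha(X_1,\dots,X_s,Y_1,\dots,Y_t)$ vanishes unless $s=p$ and $t=q$. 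This is precisely the condition that $\mathrm{Hor}(i^*\alpha)$ has transverse type $(p,q)$.

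\textbf{Part (b).} Using (\ref{curvature-2-form}), write $\mu^l = d\theta^l + \tfrac{1}{2} c_{ij}^l \theta^i \theta^j$. I will evaluate $\mu^l$ on two sections of $E^{1,0}_{\mathbb{C}}$ (the $E^{0,1}_{\mathbb{C}}$ case being identical by conjugation, and the horizontality of $\mu^l$ being already known). Let $X_1,X_2$ be local sections of $E^{1,0}_{\mathbb{C}}$. Since $\theta^i$ annihilates $E$ for every $i$, the quadratic term $\tfrac{1}{2} c_{ij}^l \theta^i \theta^j$ evaluates to zero on $(X_1,X_2)$. For the first term, the Cartan formula gives
\[
d\theta^l(X_1,X_2) = X_1\bigl(\theta^l(X_2)\bigr) - X_2\bigl(\theta^l(X_1)\bigr) - \theta^l([X_1,X_2]).
\]
The first two terms vanish because $\theta^l(X_j) = 0$ identically on $E$. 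By Lemma \ref{observation}(b), $[X_1,X_2]$ is again a section of $E^{1,0}_{\mathbb{C}} \subset E$, so $\theta^l([X_1,X_2])=0$ as well. Hence $\mu^l(X_1,X_2)=0$, and combined with horizontality this forces $\mu^l$ to be of transverse type $(1,1)$.

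\textbf{Main obstacle.} Neither step is computationally heavy; the only mild subtlety is making sure the transverse type is correctly expressed via the isomorphism $\psi: E \to Q$, so that ``type'' statements phrased in the foliated language reduce to statements about the subbundles $E^{1,0}_{\mathbb{C}}, E^{0,1}_{\mathbb{C}}$ of $TM|_Z$. Once that translation is in place, Lemma \ref{observation} does all the real work: part (a) uses (a) of that lemma and part (b) uses (b).
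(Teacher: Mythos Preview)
Your proof is correct and follows essentially the same approach as the paper's. For part (a) the paper explicitly invokes the decomposition of Theorem \ref{horizontal-decom} to justify that $\mathrm{Hor}(i^*\alpha)$ agrees with $i^*\alpha$ on horizontal vectors, whereas you state this directly; and the inclusion $E^{1,0}_{\mathbb{C}} \subset T^{1,0}_{\mathbb{C}}M|_Z$ is really immediate from the definition of $\mathcal{J}|_E$ rather than requiring Lemma \ref{observation}(a), but these are cosmetic differences. Part (b) matches the paper's computation line for line.
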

  
  \begin{proof}  \begin{itemize}
  \item[a)]  To show that $Hor(i^*\alpha)$ is of type $(p, q)$, it suffices to show that for any vectors $X_1,\cdots X_r\in E^{1,0}_{\mathbb{C},z}$, and any vectors $Y_1,\cdots, Y_s\in E^{0,1}_{\mathbb{C},z}$, where $r+s=p+q$ and $z\in Z$,  we have that
  \[ Hor(i^*\alpha)(X_1,\cdots, X_r,Y_1,\cdots, Y_s)=0,\]
  provided $r\neq p$.  By Theorem \ref{horizontal-decom}, $i^*\alpha$ admits a unique expression
  \begin{equation}\label{comparison}  i^*\alpha= \displaystyle \sum_{ I}\theta^Ih_{I}+Hor(i^*\alpha),\,\,h_{I}\in \Omega_{hor}(Z).\end{equation} Here in the above summation $\theta^I=(\theta^1)^{i_1}\cdots(\theta^k)^{i_k}$ for some non-negative integers $i_1,\cdots i_k$ satisfying $i_1+\cdots+i_k\geq 1$.  Therefore (\ref{comparison}), together with the first half of (\ref{connection-elements}), implies that
 \[ Hor(i^*\alpha)(X_1,\cdots,X_r,Y_1,\cdots,Y_s)= i^*\alpha(X_1,\cdots,X_r,Y_1,\cdots,Y_s).\]
  However, it follows easily from the definition of the almost complex structure $\mathcal{J}$ on $E$ that  $i_*(X_i)\in T_{\mathbb{C},z}^{1,0}(M)$, and that $i_*(Y_j)\in T_{\mathbb{C},z}^{0,1}(M)$, $1\leq i\leq r$, $1\leq j\leq s$. Since $\alpha\in\Omega^{p,q}(M)$,  we must have that $\alpha(i_*(X_1),\cdots,i_*(X_r),i_*(Y_1),\cdots,i_*(Y_s))=0$ provided $r\neq p$.

  \item[b)]Let $X_1, X_2\in  C^{\infty}(E^{1,0}_{\mathbb{C}})$. Since $\mu^l$ is a real two form, to show Part b) of Lemma \ref{type-1-1-curvature} it suffices to show that $ \mu^l(X_1,X_2)=0$. By definition,  
  \[ \begin{split}  \mu^l(X_1,X_2)&= (d\theta^l+c_{ij}^l\theta^i\wedge \theta^j)(X_1,X_2)\\
  &= d\theta^l(X_1,X_2)\\&
  =X_1\left(\theta^l(X_2)\right)-X_2\left(\theta^l(X_1)\right) -\theta^l([X_1,X_2])
  \\&=0\end{split}\]
  Here we have used Part b) of Lemma \ref{observation} to show that the last equality holds.
  \end{itemize}
  
 \end{proof}

 \begin{definition} \label{kirwan-forms} Let $\mathcal{C}: \Omega_K(Z)\rightarrow \Omega_{bas}(Z)$ be the Cartan operator as introduced in Definition \ref{cartan-operator}, and let $i:Z\rightarrow M$ be the inclusion map. We define
\[ \kappa:= \mathcal{C}\circ i^*: \Omega_K(M)\rightarrow \Omega_{bas}(Z)\]
to be \emph{the Kirwan map at the level of differential forms}. 
\end{definition}

\begin{remark}  It is clear from Remark \ref{dependence} that the Kirwan map introduced in Definition \ref{kirwan-forms}
depends only on the K\"ahler metric $g$.
\end{remark}

 \begin{proposition}\label{hodge-morphism} \begin{itemize}\item[a)] $\forall\,\alpha\in\Omega_K^{p,q}(M)$, $\kappa \alpha \in \Omega_{bas}^{p,q}(Z)$. \item[b)] $\forall\,\alpha\in \Omega^{p,q}_K(M)$,
 \[\kappa\overline{\partial}_K\alpha=\overline{\partial}\kappa \alpha.\]
\end{itemize}

 \end{proposition}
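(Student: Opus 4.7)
The plan is to reduce both parts to the bi-degree statements in Lemma \ref{type-1-1-curvature} combined with the chain-map property of the Cartan operator. I will handle part (a) by tracing an element of $\Omega_K^{p,q}(M)$ through the three ingredients that define $\kappa$, and then deduce part (b) by matching bi-degrees in the equation $\kappa \circ d_K = d \circ \kappa$.

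For part (a), an element $\alpha \in \Omega_K^{p,q}(M)$ can be written, in terms of the basis $\xi_1,\dots,\xi_k$ with dual coordinates $x^1,\dots,x^k$, as a $K$-invariant sum $\alpha = \sum_I x^I \otimes \alpha_I$, where $I$ ranges over multi-indices and $\alpha_I \in \Omega^{p-|I|,q-|I|}(M)$ by the very definition of the bigrading (\ref{dolb-bigrading}). Pulling back along the inclusion $i\colon Z \hookrightarrow M$ gives $i^*\alpha = \sum_I x^I \otimes i^*\alpha_I$. By Lemma \ref{type-1-1-curvature}(a), $Hor(i^*\alpha_I) \in \Omega_{hor}^{p-|I|,q-|I|}(Z)$. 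Finally the evaluation map sends $x^I \otimes Hor(i^*\alpha_I)$ to $\mu^I \, Hor(i^*\alpha_I)$, and by Lemma \ref{type-1-1-curvature}(b) the product $\mu^I$ of $|I|$ curvature two-forms is horizontal of type $(|I|,|I|)$. Consequently $\mu^I\,Hor(i^*\alpha_I)$ lies in $\Omega_{hor}^{p,q}(Z)$. Since $\kappa\alpha$ is by construction basic, we conclude $\kappa\alpha \in \Omega_{bas}^{p,q}(Z)$.

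For part (b), the inclusion $i\colon Z \to M$ is $K$-equivariant and $Z$ is $K$-invariant, so $i^*$ commutes with both the ordinary differential $d$ and with the contractions $\iota(\xi)$; hence $i^*$ is a chain map from $(\Omega_K(M),d_K)$ to $(\Omega_K(Z),d_K)$. Composing with the Cartan operator, which is a chain map by Theorem \ref{cartan-homotopy}, we obtain that $\kappa = \mathcal{C}\circ i^*$ satisfies $\kappa \circ d_K = d\circ \kappa$. Decomposing $d_K = \partial_K + \overline{\partial}_K$ on $\Omega_K(M)$ and $d = \partial + \overline{\partial}$ on $\Omega_{bas}(Z)$ (the latter by Proposition \ref{transverse-kahler-foliation} and the resulting integrable transverse complex structure), this identity becomes
\[
\kappa(\partial_K \alpha) + \kappa(\overline{\partial}_K \alpha) \;=\; \partial \kappa\alpha + \overline{\partial}\kappa\alpha.
\]
By part (a) applied to $\partial_K\alpha \in \Omega_K^{p+1,q}(M)$ and $\overline{\partial}_K\alpha \in \Omega_K^{p,q+1}(M)$, the left-hand side is the decomposition of $\kappa(d_K\alpha)$ into its $(p+1,q)$ and $(p,q+1)$ parts, which agrees with the right-hand side. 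Uniqueness of the bi-degree decomposition then forces $\kappa(\overline{\partial}_K\alpha) = \overline{\partial}\kappa\alpha$ (and simultaneously the analogous statement for $\partial_K$), completing the proof.

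The only substantive obstacle is the bi-degree bookkeeping in part (a); once Lemma \ref{type-1-1-curvature} is available, everything else is a formal consequence of the chain-map property of $\kappa$ and the uniqueness of the Hodge bi-degree splitting. I anticipate no further technical difficulty.
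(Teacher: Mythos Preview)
Your proposal is correct and follows essentially the same approach as the paper: part (a) is deduced from Lemma \ref{type-1-1-curvature} (you spell out the monomial-by-monomial bookkeeping that the paper leaves implicit), and part (b) is obtained from the chain-map identity $\kappa\, d_K = d\,\kappa$ by decomposing both sides and matching bi-degrees via part (a).
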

 
\begin{proof} Part a) of Proposition \ref{hodge-morphism} is an immediate consequence of Lemma \ref{type-1-1-curvature}.
To show that Part b) holds, first note that by Theorem \ref{cartan-homotopy} 
 $ \kappa d_K\alpha=d\kappa \alpha$, $\forall\, \alpha\in \Omega_{K}^{p,q}(M)$. Since $d_K=\partial_K+\overline{\partial}_K$ and $d=\partial+\overline{\partial}$, we have that
 \[\kappa \overline{\partial}_K\alpha+\kappa\partial_K\alpha=\overline{\partial}\kappa\alpha+\partial\kappa \alpha.\]
However, by Part a) of Proposition \ref{hodge-morphism} the Kirwan map $\kappa$ respects the bi-gradings.  
By comparing the types of the forms we see that \[ \kappa\overline{\partial}_K\alpha=\overline{\partial}\kappa \alpha.\]
%$(\mathcal{C}\circ i^*)\overline{\partial}_K \alpha, \overline{\partial}(\mathcal{C}\circ i^*)\alpha \in \Omega_{bas}^{p,q+1}(Z)$, and that $(\mathcal{C}\circ i^*)\partial_K \alpha, \partial(\mathcal{C}\circ i^*)\alpha \in \Omega_{bas}^{p+1,q}(Z)$. 

\end{proof}

 From Proposition \ref{hodge-morphism} we obtain 
two chain maps 
\begin{equation} \label{dolbeault-chain-map}\kappa: \{\Omega^{p,*}_{K}(M), \overline{\partial}_K\}\rightarrow \{\Omega_{bas}^{p,*}(Z),\overline{\partial}\},\end{equation}
\begin{equation}\label{dolbeault-chain-map2} \kappa: \{\Omega_{K,\partial_K}^{p,*}(M), \overline{\partial}_K\} \rightarrow \{\Omega_{bas,\partial}^{p, *}(Z), \overline{\partial}\}.\end{equation}

Bu abuse of notations we will also denote by $\kappa: H^{p,q}_K(M)\rightarrow H_{\overline{\partial}}^{p,q}(Z)$ and $\kappa: H^{p,q}(\Omega_{K,\partial_K}(M), \overline{\partial}_K)\rightarrow H^{p,q}(\Omega_{bas,\partial}(Z), \overline{\partial})$
 the homomorphisms 
induced by the chain maps (\ref{dolbeault-chain-map}) and  (\ref{dolbeault-chain-map2}) respectively, and will call them Kirwan maps as well.  Note that by Theorem \ref{cartan-homotopy}, $\kappa: \{\Omega_K(M), d_K\}\rightarrow \{\Omega_{bas}(Z),d\}$ is also a chain map. It induces a homomorphism $\kappa :H_K(M)\rightarrow H_B(Z)$, which is the usual Kirwan map at the level of cohomologies. We are ready to prove the following theorem.

\begin{theorem}\label{Kirwan-surj-eq-dolbeault} Assume that the equivariant K\"ahler $K$-manifold $M$ is compact, and that the Lie group $K$ is compact. Then the Kirwan map
\[ \kappa: H^r_K(M,\mathbb{R}) \rightarrow H_{B}^r(Z,\mathbb{R}) \]
is a morphism of real Hodge structures of bi-degree $(0,0)$ with respect to the pure Hodge structures described in Theorem \ref{eq-hodge-decom} and Corollary \ref{basic-hodge-decom} respectively.
As a result, the Kirwan map
\[ \kappa: H_K^{p,q}(M)\rightarrow H^{p,q}_{B,\overline{\partial}}(Z)\]
is surjective, and so the following inequality holds.
\begin{equation}\label{hodge-number-inequality} h^{p,q}_B(M)\leq h_K^{p,q}(M)\end{equation}
\end{theorem}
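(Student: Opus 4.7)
The plan is to assemble Theorem \ref{Kirwan-surj-eq-dolbeault} as a formal consequence of Proposition \ref{hodge-morphism}, Theorem \ref{eq-hodge-decom}, Corollary \ref{basic-hodge-decom}, Theorem \ref{cartan-homotopy}, and the classical Kirwan surjectivity theorem. The first observation I would record is that Proposition \ref{hodge-morphism}(b), together with the same type-comparison argument applied to $\partial_K$, shows that $\kappa$ intertwines $\partial_K$ with $\partial$ as well. Hence $\kappa$ restricts to a bi-grading preserving chain map $\{\Omega^{p,*}_{K,\partial_K}(M),\overline{\partial}_K\}\to\{\Omega^{p,*}_{bas,\partial}(Z),\overline{\partial}\}$, and at cohomology this induces a bi-graded homomorphism $\bigoplus_{p+q=r} H^{p,q}(\Omega_{K,\partial_K}(M),\overline{\partial}_K)\to\bigoplus_{p+q=r} H^{p,q}(\Omega_{bas,\partial}(Z),\overline{\partial})$.

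Next I would form the commutative square of inclusions $\Omega_{K,\partial_K}(M)\hookrightarrow\Omega_K(M)$ and $\Omega_{bas,\partial}(Z)\hookrightarrow\Omega_{bas}(Z)$, with $\kappa$ realising the two horizontal arrows. Passing to cohomology, Theorem \ref{eq-hodge-decom}(b) and Corollary \ref{basic-hodge-decom} turn the two vertical inclusion-induced maps into isomorphisms, and the bi-graded Kirwan map above is thereby identified with the usual total Kirwan map $\kappa: H^r_K(M,\mathbb{C})\to H^r_B(Z,\mathbb{C})$. Since the Cartan operator is a chain homotopy equivalence from $\{\Omega_K(Z),d_K\}$ to $\{\Omega_{bas}(Z),d\}$ by Theorem \ref{cartan-homotopy}, the latter map is the composition of the usual equivariant Kirwan map $H^r_K(M)\to H^r_K(Z)$ with an isomorphism, and is therefore surjective by Theorem \ref{Kirwan-surj}. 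Because the target decomposes as a direct sum over $(p,q)$ and the map preserves bi-grading, each component $\kappa: H^{p,q}(\Omega_{K,\partial_K}(M),\overline{\partial}_K)\to H^{p,q}(\Omega_{bas,\partial}(Z),\overline{\partial})$ must be surjective; combining this with the isomorphisms (\ref{iso-1}) and (\ref{quasi-iso}) yields surjectivity of $\kappa: H^{p,q}_K(M)\to H^{p,q}_{B,\overline{\partial}}(Z)$, whence the Hodge number inequality (\ref{hodge-number-inequality}) follows immediately.

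To upgrade this to the statement that $\kappa$ is a morphism of real Hodge structures of bi-degree $(0,0)$, I would simply note that both $i^*$ and the Cartan operator $\mathcal{C}$ are defined over $\mathbb{R}$, so the complexified map $\kappa_{\mathbb{C}}$ is the complexification of the underlying real map $\kappa_{\mathbb{R}}: H^r_K(M,\mathbb{R})\to H^r_B(Z,\mathbb{R})$; the preservation of the $(p,q)$-components of the Hodge decomposition is precisely the bi-degree $(0,0)$ condition, which we have already verified. I do not foresee any substantive difficulty beyond what is already in place: the technical heart of the argument is Lemma \ref{type-1-1-curvature} on the type of the curvature forms of the canonical connection, and the rest is bookkeeping. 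The only step I would write out carefully is the commutativity of the square on cohomology, i.e.\ checking that the inclusion maps used in the two Hodge decomposition theorems intertwine with $\kappa$; this holds because $\kappa$ sends $\partial_K$-closed forms to $\partial$-closed forms, so the restriction and the inclusion genuinely fit into a commutative diagram of complexes before any cohomology is taken.
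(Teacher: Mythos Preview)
Your proposal is correct and follows essentially the same route as the paper: you build the same commutative square comparing the Hodge decompositions of $H_K(M)$ and $H_B(Z)$ via the inclusions of $\partial_K$-closed (resp.\ $\partial$-closed) forms, invoke Kirwan surjectivity on the total cohomology, and then read off surjectivity on each $(p,q)$-summand before transferring to Dolbeault cohomology via (\ref{iso-1}) and (\ref{quasi-iso}). Your added remarks---that $\kappa$ also intertwines $\partial_K$ with $\partial$ (needed for the restriction to $\Omega_{K,\partial_K}$) and that $\kappa$ is defined over $\mathbb{R}$---are exactly the small justifications the paper leaves implicit.
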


\begin{proof}  The first claim follows easily from Proposition \ref{hodge-morphism}. This implies that
we have the following commutative diagram.
\[ \begin{tikzcd}
\displaystyle \bigoplus_{p+q=k}H^{p,q}(\Omega_{K,\partial_K}(M),\overline{\partial}_K)\arrow[r, "\cong"] \arrow[d,"\kappa"]& H^k_K(M)\arrow[d, "\kappa"]\\
\displaystyle \bigoplus_{p+q=k}H^{p,q}(\Omega_{bas,\partial}(Z), \overline{\partial})\arrow[r, "\cong"]& H^k_B(Z)
\end{tikzcd} \]
By Theorem \ref{eq-hodge-decom} and Corollary \ref{basic-hodge-decom}, both the top and the bottom horizontal maps in the above diagram are isomorphisms. By Theorem \ref{Kirwan-surj}, the right vertical map is surjective. It follows that the left vertical map must be surjective as well. However, since the left vertical map maps each component $H^{p,q}(\Omega_{K,\partial_K}(M),\overline{\partial}_K)$ into $H^{p,q}(\Omega_{bas,\partial},\overline{\partial})$,  the map
\[ \kappa: H^{p,q}(\Omega_{K,\partial_K}(M), \overline{\partial}_K)\rightarrow H^{p,q}(\Omega_{bas,\partial}(Z), \overline{\partial})\]
must be surjective as well for each pair of $(p, q)$.
Now consider the commutative diagram
\[\begin{tikzcd}
 H^{p,q}(\Omega_{K,\partial_K}(M),\overline{\partial}_K)\arrow[r, "\cong"] \arrow[d, "\kappa"]& H^{p,q}_K(M)\arrow[d, "\kappa"]\\
H^{p,q}(\Omega_{bas,\partial}(Z), \overline{\partial})\arrow[r,"\cong"]& H^{p,q}_{B,\overline{\partial}}(Z)
\end{tikzcd} \]

Since the $K$-manifold $M$ is equivariantly formal, it follows from Theorem \ref{eq-hodge-decom} and Corollary \ref{basic-hodge-decom} that both the top and the bottom horizontal maps are isomorphisms. By our work above, the left vertical map is surjective. It follows that the right vertical map must be surjective as well.
\end{proof}

By our assumption, the quotient space $M_0=Z/K$ is an orbifold. Clearly,  the transverse K\"ahler structure on $Z$ naturally descends to a K\"ahler structure on $M_0$. We note that there is a natural isomorphism from the complex of basic forms $\{\Omega_{bas}(Z), d\}$ to the complex of differential forms $\{\Omega(M_0),d\}$, and that this natural isomorphism respects the type of differential forms. Thus we must have that $H_{B,\overline{\partial}}^{p,q}(Z)\cong H_{\overline{\partial}}^{p,q}(M_0)$, which implies that $h_B^{p,q}(Z)=h^{p,q}(M_0)$.
The following result is an immediate consequence of Theorem \ref{Kirwan-surj-eq-dolbeault}, which also answers the open question raised by Weitsman.

\begin{corollary}\label{hodge-diamond}  Consider the holomorphic Hamiltonian action of a compact Lie group $K$ on a compact K\"ahler manifold $(M,\omega)$ with a moment map $\Phi: M\rightarrow \mathfrak{k}^*$. Assume that $K$ acts locally freely on the level set $Z=\Phi^{-1}(0)$, and that $M_0=Z/K$ is the K\"ahler quotient. Then there is a natural surjective Kirwan map

\[ \kappa: H_K^{p,q}(M)\rightarrow H^{p,q}_{\overline{\partial}}(M_0).\]

In particular, if $h^{p,q}(M)=0$ when $p\neq q$, then 
$h^{p,q}(M_0)=0$ when $p\neq q$.
\end{corollary}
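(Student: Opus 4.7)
The plan is to deduce this corollary directly from Theorem \ref{Kirwan-surj-eq-dolbeault} by identifying the basic Dolbeault cohomology of $Z$ with the Dolbeault cohomology of the K\"ahler orbifold $M_0 = Z/K$ in a type-preserving way. First I would invoke Proposition \ref{transverse-kahler-foliation} to record that the locally free $K$-action on $Z$ produces a homologically orientable transversely K\"ahler foliation $\mathcal{F}$, whose transverse K\"ahler data descends to the quotient orbifold $M_0$ and equips it with a canonical K\"ahler structure, essentially by construction.

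Next I would analyze the natural pullback map $\pi^{*}: \Omega(M_0) \to \Omega_{bas}(Z)$ associated to the orbifold submersion $\pi: Z \to M_0$. Because $K$ acts locally freely, $\pi^{*}$ is an isomorphism of graded complexes intertwining the exterior differentials. Moreover, by the very definition of the quotient complex structure on $M_0$, the transverse complex structure on $Z$ pulls back to the almost complex structure on $M_0$, so $\pi^{*}$ respects the bigradings and intertwines $\overline{\partial}_{M_0}$ with the basic $\overline{\partial}$ of Definition \ref{dbar-operator}. This yields a type-preserving isomorphism $H^{p,q}_{\overline{\partial}}(M_0) \cong H^{p,q}_{B,\overline{\partial}}(Z)$. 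Composing with the surjection furnished by Theorem \ref{Kirwan-surj-eq-dolbeault} produces the desired surjective Kirwan map $\kappa: H^{p,q}_K(M) \to H^{p,q}_{\overline{\partial}}(M_0)$.

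For the final consequence on Hodge diamonds, I would combine this surjectivity with the equivariant Hodge decomposition of Theorem \ref{eq-formality1}(a), namely
\[ H^{p,q}_K(M,\mathbb{C}) \cong \bigoplus_{p'+i=p,\,q'+i=q}(S^i\mathfrak{k}^*)^K \otimes H^{p',q'}_{\overline{\partial}}(M).\]
Under the hypothesis $h^{p',q'}(M) = 0$ for $p' \neq q'$, only diagonal summands with $p' = q'$ survive, forcing $H^{p,q}_K(M,\mathbb{C}) = 0$ whenever $p \neq q$; surjectivity of $\kappa$ then gives $h^{p,q}(M_0) = 0$ for $p \neq q$. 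There is no serious obstacle here: the whole content of the corollary is a translation of the theorem from the basic/foliation picture into the orbifold picture, and the mildest care one needs is in handling the orbifold formalism to identify $\Omega(M_0)$ with $\Omega_{bas}(Z)$ as bigraded complexes. This identification is essentially tautological once the quotient K\"ahler structure on $M_0$ is defined via the transverse K\"ahler structure of $\mathcal{F}$.
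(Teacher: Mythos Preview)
Your proposal is correct and follows essentially the same route as the paper: identify $\Omega_{bas}(Z)$ with $\Omega(M_0)$ as bigraded complexes via $\pi^*$, so that $H^{p,q}_{B,\overline{\partial}}(Z)\cong H^{p,q}_{\overline{\partial}}(M_0)$, and then apply Theorem~\ref{Kirwan-surj-eq-dolbeault}. Your explicit invocation of Theorem~\ref{eq-formality1}(a) for the ``in particular'' statement is exactly what the paper leaves implicit when it calls the corollary an immediate consequence.
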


%\begin{proof}   The first half of Theorem \ref{hodge-diamond} is obvious. By Part a) of Theorem \ref{eq-formality1} we have that $h^{p,q}_K(M)=0$ when $p\neq q$. It follows from Theorem \ref{Kirwan-surj-eq-dolbeault} that $h^{p,q}(M_0)=0$ when $p\neq q$.

%\end{proof}

We would like to emphasize that the definition of the Kirwan map at the level of differential forms does not depend on the compactness of $M$. As a result, the method developed in this paper may be applied to situations where $M$ is not compact as well. For example, according to the work \cite{LT97} of Lerman and Tolman, any complete simplicial toric variety can be obtained as a symplectic quotient of a complex vector space $\mathbb{C}^N$ by the linear action of a compact abelian group $K$.  Since $H_K(\mathbb{C}^N)\cong \mathbb{C}[x_1,\cdots, x_k]$, and since the Kirwan surjectivity theorem is known to hold in this case, one can easily derive from Proposition \ref{hodge-morphism} a symplectic proof of the following well-known result on the Hodge numbers of a complete simplicial toric variety.

\begin{theorem}\label{simplicial-toric}(\cite[Thm. 9.3.2]{CLS11}) For a complete simplicial toric variety $X$, the Hodge number $h^{p,q}(X)=0$ provided $p\neq q$.

\end{theorem}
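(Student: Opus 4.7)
The plan is to realize the toric variety $X$ as a Kähler quotient and then transfer the vanishing of off-diagonal equivariant Hodge numbers across the Kirwan map. By the Lerman--Tolman construction cited in the paper, there exist an integer $N$ and a compact abelian group $K$ acting linearly, hence holomorphically and in a Hamiltonian way with moment map $\Phi$, on $M:=\mathbb{C}^N$, such that $X$ is identified with the Kähler quotient $M_0=\Phi^{-1}(0)/K$. Because $M$ is equivariantly contractible to the origin, we have
\[H_K(M,\mathbb{C})\cong (S\mathfrak{k}^*)^K\cong \mathbb{C}[x_1,\ldots,x_k], \quad k=\dim K.\]
Under the equivariant bi-grading $\Omega_K^{p,q}(M)=\bigoplus_{p'+i=p,\,q'+i=q}(S^i\mathfrak{k}^*\otimes\Omega^{p',q'}(M))^K$, each generator $x_j\in S^1\mathfrak{k}^*$ sits in bi-degree $(1,1)$, so every class in $H_K(M,\mathbb{C})$ has equivariant Hodge type $(p,p)$, that is, $h_K^{p,q}(M)=0$ whenever $p\neq q$.

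Next I would argue that the bi-graded Kirwan surjectivity $\kappa:H_K^{p,q}(M)\to H^{p,q}_{\overline{\partial}}(M_0)$ remains valid in this non-compact setting. Inspection of the proof of Theorem \ref{Kirwan-surj-eq-dolbeault} shows that compactness of $M$ enters only through three ingredients: equivariant formality of $M$, the homological orientability of the transverse Kähler foliation on $Z=\Phi^{-1}(0)$, and the surjectivity of the ordinary Kirwan map $H_K(M)\twoheadrightarrow H_B(Z)$. All three hold here: equivariant formality is automatic since $H_K(M)\cong (S\mathfrak{k}^*)^K\otimes H(M)$ with $H(M)=\mathbb{C}$; the moment map $\Phi$ is proper because the level set $\Phi^{-1}(0)$ is compact (as $M_0$ is a complete toric orbifold), so Proposition \ref{transverse-kahler-foliation} applies verbatim and delivers a homologically orientable transverse Kähler structure on $Z$; and ordinary Kirwan surjectivity for linear Hamiltonian actions on linear spaces with proper moment map is contained in Kirwan's original work. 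The key bi-graded chain-level input, Proposition \ref{hodge-morphism}, is manifestly independent of compactness, since the Kirwan map at the level of forms depends only on the Kähler metric. Consequently the diagram chase at the end of Theorem \ref{Kirwan-surj-eq-dolbeault} goes through and yields surjectivity of $\kappa$ piece by piece.

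Combining these two observations, surjectivity of $\kappa:H_K^{p,q}(M)\to H_{\overline{\partial}}^{p,q}(M_0)$ together with the vanishing $h_K^{p,q}(M)=0$ for $p\neq q$ forces $h^{p,q}(M_0)=h^{p,q}(X)=0$ for $p\neq q$. The main obstacle in this argument is purely bookkeeping: one must audit each invocation of compactness in Sections \ref{eq-dolbeault}--\ref{hodge-structure} and replace it either by equivariant contractibility of $\mathbb{C}^N$ onto the origin or by compactness of $Z$ (equivalently, properness of $\Phi$). Since the abstract $dd'$-machinery of Proposition \ref{decomposition} is formal and requires no compactness whatsoever, and since the transverse Kähler arguments of Section \ref{foliation} only need $Z$ compact, the extension is routine and yields Theorem \ref{simplicial-toric}.
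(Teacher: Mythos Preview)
Your proposal is correct and follows exactly the sketch the paper gives in the paragraph preceding the theorem: realize $X$ as a K\"ahler quotient of $\mathbb{C}^N$ via Lerman--Tolman, observe that $H_K(\mathbb{C}^N)\cong\mathbb{C}[x_1,\dots,x_k]$ is concentrated in bidegrees $(p,p)$, invoke the known Kirwan surjectivity in this linear setting, and push the bi-graded argument of Proposition~\ref{hodge-morphism} and Theorem~\ref{Kirwan-surj-eq-dolbeault} through after auditing compactness hypotheses. One small slip worth correcting: compactness of the single fibre $\Phi^{-1}(0)$ does not make $\Phi$ proper, but inspection of the proof of Proposition~\ref{transverse-kahler-foliation} shows that only compactness of $Z$ (hence of $Z/K$) is actually used for homological orientability, so your argument is unaffected.
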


Finally we would like to comment that the techniques developed in this paper may well be used to attack problems that can not be accessed using methods from algebraic geometry. For instance, given an arbitrary irrational simple polytope, Prato \cite{Pra01} constructed a foliation analogue of symplectic toric manifolds, called a symplectic toric quasifold, using a variation of the Delzant's symplectic quotient construction. Prato's symplectic toric quasifolds do not exist in the algebraic category. However, Battaglia and Zaffran \cite{BZ15} observed that they can be realized as leaf spaces of holomorphic foliations, and conjectured that their basic Hodge numbers satisfy $h^{p, q}=0$ if $p\neq q$.  We believe that the conjecture of Battaglia and Zaffran can be confirmed using an argument similar to the one that we outlined above to show Theorem \ref{simplicial-toric}.  However,  the reason why the Kirwan surjectivity continues to hold in this situation requires a careful explanation. We will leave all the details in a forthcoming research paper.

%\begin{theorem}
%Let $T$ act on a symplectic manifold $M$ with a proper moment map $\Phi: M\rightarrow \mathfrak{t}^*$, and assume that the set of connected components of the fixed point set is finite. If $0$ is a regular value of $\Phi$, then the Kirwan map $\kappa : H_T^*(M; \mathbb{C}) \rightarrow H^*(M_0; \mathbb{C})$ is a surjection.
%\end{theorem}


\begin{thebibliography}{GP}

\bibitem[AIM04]{AIM04} M. Harada, {\em AIM Workshop on Moment Maps and Surjectivity in Various Geometries (August 09-13, 2004): Conjectures and Further Questions}, available on line at https://aimath.org/WWN/momentmaps/momentmaps.pdf



\bibitem[BZ15]{BZ15}
F. Battaglia, D. Zaffran,
{\em Foliations modeling nonrational simplicial toric varieties},
Int. Math. Res. Not., \textbf{22} (2015), 11785-11815.



\bibitem[BBFMT17]{BBFMT17} G. Bazzoni, I. Biswas, M. Fern\'andez, V. Munoz, Aleksy Tralle, {\em Homotopic Properties of K\"ahler Orbifolds}, Special Metrics and Group Actions in Geometry, 23 - 57.



\bibitem[CLS11]{CLS11} David A. Cox, John B. Little, Henry K. Schenck, {\em Toric varieties}, Graduate Studies in mathematics, Volume 124, American Mathematical Society, Providence, Rhode Island.


%\bibitem{Au04}
%M. Audin,
%\emph{Torus actions on symplectic manifolds} (2nd Edition),
%Progress in Mathematics, Volume 93, Birkh\"{a}user Verlag (2004).
\bibitem[DGMS75]{DGMS75}  P. Deligne, P. Griffiths, J. Morgan, D. Sullivan, {\em Real homotopy theory of K\"ahler manifolds}, Invent. Math. 29 (1975) 245-274. 




\bibitem[Gin87]{Gin87} V. A. Ginzburg, {\em Equivariant cohomology and K\"ahler geometry}, Funktsional. Anal. i Prilozhen. 21 (1987), no. 4, 19-34, 96, English translation in Functional Anal. Appl. 21 (1987), no. 4, 271-283. 

\bibitem[GS82]{GS82} V. Guillemin, S. Sternberg, {\em Geometric quantization and multiplicities of group representations}, Inventiones mathematicae, October 1982, Volume 67, Issue 3, 515-538.

\bibitem[GS99]{GS99} V. Guillemin,  S. Sternberg, {\em Supersymmetry and equivariant de Rham theory}, Mathematics Past and Present, springer-Verlag, Berlin, 1999.

\bibitem[KN79]{KN79} G. Kempf and L. Ness, {\em The length of vectors in representation spaces}, Algebraic Geometry, Lecture Notes in Mathematics, vol. 732, Springer Berlin / Heidelberg, 1979, 233-243.

\bibitem[LT97]{LT97} E. Lerman, S. Tolman, {\em Hamiltonian torus actions on symplectic orbifolds and toric varieties}, 
Transaction of the AMS, Volume 349, Number 10, October 1997, 4201-4230.








\bibitem[KA90]{KA90}
A. El Kacimi-Alaoui,
{\em Op\'{e}rateurs transversalement elliptique sur un feuilletage riemannien et applications},
Compositio Math., \textbf{73} (1990), 57-106.

\bibitem[Kir84]{Kir84}
Frances  Kirwan, {\em Cohomology of quotients in symplectic and algebraic geometry}, Mathematical Notes, vol. 31, Princeton University Press, Princeton, NJ, 1984.


\bibitem[Lilly98]{Lilly98} S. Lillywhite,  {\em The topology of symplectic quotients of loop spaces}, Ph.D. thesis, The University of Maryland, College Park, MD., 1998.
\bibitem[Lilly03]{Lilly03} S. Lillywhite,
{\em Formality in an equivariant setting},
Trans. Amer. Math. Soc., \textbf{335}: 7 (2003), 2771-2793.
\bibitem[LS18]{LS18}
Y. Lin, R. Sjamaar,
\emph{Cohomological localization for transverse Lie algebra actions on Riemannian foliations},
Preprint, arXiv:1806.01908v1.

\bibitem[LY19]{LY19} Y. Lin, X. Yang, {\em Basic Kirwan injectivity and its applications}, Preprint.
\bibitem[Mo88]{Mo88}
P. Molino,
\emph{Riemannian foliations},
With appendices by G. Cairns, Y. Carri\`{e}re, \'{E}. Ghys, E. Salem and V. Sergiescu,
Birkh\"{a}user, Boston, 1988.
\bibitem[Pra01]{Pra01}
E. Prato,
{\em Simple non-rational convex polytopes via symplectic geometry},
Topology \textbf{40} (2001), 961-975.

%\bibitem{Ser85}
%V. Sergiescu,
%\emph{Cohomologie basique et dualit\'{e} des feuilletages riemanniens},
%Ann. Inst. Fourier (Grenoble) \textbf{35} (1985), no. 3, 137-158.

\bibitem[T00]{T00}
C. Teleman,
{\em The quantization conjecture revistied},
Ann. of Math.(2), \textbf{152}:1 (2000), 1-43.


%\bibitem{WO15a}
%C. Woodward,
%\emph{Quantum Kirwan morphism and Gromov-Witten invariants of quotients I},
%Transform. Groups 20 (2015), 507-556.
%
%
%\bibitem{WO15b}
%C. Woodward,
%\emph{Quantum Kirwan morphism and Gromov-Witten invariants of quotients II},
%Transform. Groups 20 (2015), 881-920.
%
%
%\bibitem{WO15c}
%C. Woodward,
%\emph{Quantum Kirwan morphism and Gromov-Witten invariants of quotients III},
%Transform. Groups 20 (2015), 1155-1193.

\end{thebibliography}
\end{document}